\newcolumntype{R}{>{$}r<{$}} %
\newcolumntype{V}[1]{>{[\;}*{#1}{R@{\;\;}}R<{\;]}} %
\newtheorem{prop}{Proposition}
\newtheorem{remark}{Remark}
\Crefname{ALC@unique}{Line}{Lines} 
\definecolor{forestgreen(traditional)}{rgb}{0.0, 0.27, 0.13}
\definecolor{darkblue}{rgb}{0.0, 0.0, 0.55}
\DeclareMathOperator{\Fcal}{\mathcal{F}}
\DeclareMathOperator{\Rbb}{\mathbb{R}}
 \newcommand{\h}[1]{\mathbf{#1}}
\title{A Scale Invariant Approach for Sparse Signal Recovery\thanks{Submitted to the journal's Methods and Algorithms for Scientific Computing section December 18, 2018. \funding{YR and YL were partially supported by NSF grants DMS-1522786 and CAREER 1846690.} }}
\author{Yaghoub Rahimi\thanks{Department of Mathematical Sciences, University of Texas at Dallas, Richardson, TX 75080 (\email{yxr160430@utdallas.edu}, \email{chaowang.hk@gmail.com}, \email{yifei.lou@utdallas.edu}).}
\and Chao Wang\footnotemark[2] \thanks{the corresponding author.}
\and Hongbo Dong\thanks{Department of Mathematics and Statistics, Washington State University, Pullman, WA 99164 (\email{hongbo.dong@wsu.edu}).}
\and Yifei Lou\footnotemark[2]}
\begin{document}

\maketitle

\begin{abstract}
In this paper, we study the ratio of the $L_1 $ and $L_2 $ norms, denoted as $L_1/L_2$, to promote sparsity.  Due to the  non-convexity and non-linearity, there has been little attention to this scale-invariant model. Compared to popular models in the literature such as the $L_p$ model for $p\in(0,1)$ and the transformed $L_1$ (TL1), this ratio model is parameter free. Theoretically, we present a strong null space property (sNSP) and prove that any sparse vector is a local minimizer of the $L_1 /L_2 $ model provided with this sNSP condition. Computationally, we focus on a  constrained formulation that can be solved via the alternating direction method of multipliers (ADMM).  Experiments show that the proposed approach is comparable to the state-of-the-art methods in sparse recovery. In addition, a variant of the $L_1/L_2$ model to apply on the gradient is also discussed with a proof-of-concept example of the MRI reconstruction. 
\end{abstract}

\begin{keywords}
  Sparsity, $L_0$, $L_1$, null space property, alternating direction method of multipliers, MRI reconstruction 
\end{keywords}

\begin{AMS}
  90C90, 65K10, 49N45, 49M20 
\end{AMS}

\section{Introduction}
Sparse signal recovery is to find the sparsest solution of $ A\mathbf{x} = \mathbf{b}$ where $ \mathbf{x} \in \mathbb{R}^n, \mathbf{b} \in \mathbb{R}^m,$ and $ A \in \mathbb{R}^{m \times n} $ for $ m \ll n $.
This problem is often referred to as \textit{compressed sensing} (CS) in the sense that the sparse signal $\h x$ is compressible. Mathematically, this fundamental problem in CS can be  formulated  as
\begin{equation}\label{eq:l0}
\min\limits_{\mathbf{x}\in \mathbb{R}^n} \|\mathbf{x}\|_0 \quad \text{s.t.} \quad A\mathbf{x} = \mathbf{b},
\end{equation}
where $\|\h x\|_0$ is the number of nonzero entries in $\h x$. 
Unfortunately, \eqref{eq:l0} is  NP-hard \cite{natarajan95} to solve. A popular approach in CS is to replace $L_0$ by the convex  $L_1$ norm,  i.e.,
\begin{equation}\label{eq:l1}
\min\limits_{\mathbf{x}\in \mathbb{R}^n} \|\mathbf{x}\|_1 \quad \text{s.t.} \quad A\mathbf{x} = \mathbf{b}.
\end{equation}
Computationally, there are various $L_1$ minimization algorithms such as primal dual \cite{chambolleP11}, forward-backward splitting \cite{raguet2013generalized}, and alternating direction method of multipliers (ADMM) \cite{boydPCPE11admm}. 

	A major breakthrough in CS was the \textit{restricted isometry property} (RIP) \cite{CRT}, which provides a sufficient condition of minimizing the $L_1$ norm to recover the sparse signal. There is a necessary and sufficient condition given in terms of null space of the matrix $A$, thus referred to as \textit{null space property} (NSP); see \cref{def:NSP}.
	 \begin{definition}[null space property \cite{cohen2009compressed}]\label{def:NSP}
		For any matrix $A \in \mathbb{R}^{m \times n} $, we say the matrix $A$ satisfies  a null space property  (NSP)  of order $s$ if 
		\begin{equation}\label{eq:NSP}
		\left\|\h v_S\right\|_1 < \left\|\h v_{\bar{S}}\right\|_1, \  \h v \in \ker(A) \backslash \{ \h 0 \}, \ \forall S \subset [n], \ |S| \leq s,
		\end{equation}
		where $[n]:=\{1, \dots, n\}$, $\bar{S}$ is the complement of $S$, i.e.,  $ [n]\backslash S $,   and  $\h x_S$ is defined as 
			\begin{equation*}
			(\h x_S)_i = \begin{cases}
			x_i & \text{ if } i \in S,\\
			0 & \text{ otherwise}. 
			\end{cases}
			\end{equation*}
			The null space of $A$ is denoted by $\ker(A) := \{\h x \ | \ A \h x = \h 0 \}$.
	\end{definition}

	Donoho and Huo \cite{donoho2001uncertainty} proved that every $s$-sparse signal $\h x\in \mathbb {R} ^{n}$  is the unique solution to the $L_1$ minimization \eqref{eq:l1} if and only if $A$ satisfies the NSP of order $s$.  NSP quantifies the notion that vectors in the null space of $A$ should not be too concentrated on a small subset of indices. Since it is a necessary and sufficient condition, NSP is widely used in proving other exact recovery guarantees. Note that NSP is no longer necessary if ``every $s$-sparse vector'' is relaxed. A weaker\footnote{The sufficient condition of \eqref{eq:NSPratio} is weaker than the one in \eqref{eq:NSP}.} sufficient condition  for the exact $L_1$ recovery was proved by Zhang \cite{zhang2013theory}. It is stated that if a vector $\h x^*$ satisfies $A\h x^*=\h b$ and
	\begin{equation}\label{eq:NSPratio}
	\sqrt{\|\h x^*\|_0}<\frac 1 2\min\limits_{\mathbf{v}}\left\{\dfrac{\|\h v\|_1}{\|\h v\|_2}: \h v\in\mbox{ker}(A)\backslash\{\h 0\} \right\},
	\end{equation}
	then $\h x^*$ is the unique solution to both \eqref{eq:l0} and \eqref{eq:l1}.
	Unfortunately, neither RIP nor NSP can be numerically verified for a given matrix
	\cite{bandeira2013certifying,tillmann2014computational}.
	
	 Alternatively, a computable condition for $L_1$'s exact recovery is based on \textit{coherence}, which is defined as 
	 \begin{equation}
	 \mu(A) := \max_{i\neq j} \dfrac{|\h a_i^T\h a_j|}{\|\h a_i\|\|\h a_j\|},	 \end{equation}
	 for a matrix $A=[\h a_1, \dots, \h a_N].$ 
Donoho-Elad \cite{donohoE03} and Gribonval \cite{gribonval2003sparse} proved independently that 
if  $\h x^\ast$ satisfies $A\h x^*=\h b$ and 
\begin{equation}\label{eq:coherence}
\|\h x^\ast \|_0<\frac 1 2 \left(1+\frac 2 {\mu(A)}\right),
\end{equation}
then $\h x^\ast$ is the optimal solution to both \eqref{eq:l0} and  \eqref{eq:l1}. 
Clearly, the coherence  $\mu(A)$ is bounded by $[0,1]$. The inequality \eqref{eq:coherence} implies that $L_1$ may not perform well
for highly coherent matrices,  i.e., $\mu(A)\sim 1$, as $\|\h x\|_0$ is then at most one, which seldom occurs simultaneously with $A\h x^*=\h b$.

Other than the popular   $L_1$ norm, 
there are a variety of regularization functionals to promote sparsity, such as $L_p$ \cite{chartrand07,xuCXZ12,laiXY13}, $L_1$-$L_2$ \cite{yinEX14,louYHX14}, capped $L_1$  (CL1) \cite{zhang2009multi,shen2012likelihood}, and transformed $L_1$ (TL1) \cite{lv2009unified,zhangX17,zhangX18}. Most of these 
models are nonconvex, leading to difficulties in proving exact recovery guarantees and algorithmic convergence, but they tend to give better empirical results compared to the convex $L_1$ approach. For example,  it was reported in \cite{yinEX14,louYHX14} that $L_p$ gives superior results for incoherent matrices (i.e., $\mu(A)$ is small), while  $L_1$-$L_2$ is the best for the coherent scenario. In addition, TL1 is always the second best no matter whether the matrix is coherent or not \cite{zhangX17,zhangX18}.

In this paper, we study the ratio of $L_1 $ and $L_2 $ as a scale-invariant  model to approximate the desired $L_0$, which is scale-invariant itself. In  one dimensional (1D) case (i.e., $n =1$), the $L_1/L_2$ model is exactly the same as the $L_0$ model if we use the convention  $\frac{0}{0}=0$.   
The ratio of $L_1 $ and $L_2 $ was first proposed by Hoyer \cite{hoyer2002} as a sparseness measure and later highlighted in \cite{hurleyR09} as a scale-invariant model. However, there has been little attention on it due to its computational difficulties arisen from being non-convex and non-linear. There are some theorems that establish the equivalence between the $L_1/L_2 $ and the $L_0$ models, but  only restricted to nonnegative signals \cite{esserLX13,yinEX14}. We aim to apply this ratio model to arbitrary signals. On the other hand, the $L_1/L_2$ minimization has an intrinsic drawback that it tends to produce one erroneously large coefficient while suppressing the other non-zero elements, under which case the ratio is reduced. To compensate for this drawback, it is helpful to incorporate a box constraint, which will also be addressed in this paper.

Now we turn  to a sparsity-related assumption that  signal is sparse after a given transform, as opposed to signal itself being sparse. This assumption is widely used in image processing. 
For example,  a natural image, denoted by $u$, is mostly sparse after taking gradient, and hence it is reasonable to minimize the $L_0$ norm of the gradient, i.e., $\|\nabla u\|_0$. To bypass the NP-hard $L_0$ norm, 
the convex relaxation replaces $L_0$ by $L_1$, 
where the  $L_1$ norm of the gradient is the well-known total variation (TV) \cite{rudinOF92} of an image.
 A weighted $L_1$-$\alpha L_2$ model (for $\alpha>0$) on the gradient was proposed in \cite{louZOX15}, which suggested that $\alpha=0.5$ yields better results than $\alpha =1$ for image denoising, deblurring, and MRI reconstruction. The ratio of $L_1$ and $L_2$ on the image gradient was used in deconvolution and blind deconvolution
\cite{krishnan2011blind,repetti2015euclid}. We further adapt the proposed ratio model from sparse signal recovery to imaging applications, specifically focusing on MRI reconstruction.

The rest of the paper is organized as follows. 
\Cref{sect:model} is devoted to theoretical analysis of the $L_1/L_2$ model.   In \Cref{sect:algorithms}, we apply the ADMM to minimize the ratio of $L_1$ and $L_2$ with two variants of incorporating  a box constraint as well as applying on the image gradient. We conduct extensive experiments in \Cref{sect:experiments} to demonstrate the performance of the proposed approaches over the state-of-the-art in sparse recovery and MRI reconstruction. \Cref{sect:ratio} is a fun exercise, where we use the $L_1/L_2$ minimization to compute the right-hand-side of the NSP condition \eqref{eq:NSPratio}, leading to an empirical upper bound of the exact $L_1$ recovery guarantee.  
Finally, conclusions and future works are given in \Cref{sect:conclusion}.

\section{Rationales of the $L_1/L_2 $ model}\label{sect:model}
We begin with a toy example to illustrate the advantages of $L_1/L_2$ over other alternatives, followed by some theoretical properties of the proposed model.

\subsection{Toy example} 
Define a matrix $A $ as
\begin{equation}
A :=
\begin{bmatrix}
1 & -1 & 0 & 0 & 0 & 0
\\
1 & 0 & -1 & 0 & 0 & 0
\\
0 & 1 & 1 & 1 & 0 & 0
\\
2 & 2 & 0 & 0 & 1 & 0
\\
1 & 1 & 0 & 0 & 0 & -1
\end{bmatrix}
\in \mathbb{R}^{5 \times 6},
\end{equation}
and $\h b = (0,0,20,40,18)^{T} \in \mathbb{R}^{5} $. It is straightforward  that any general solutions of $A \h x =\h  b $ have the form of
$\h x = (t,t,t,20-2t,40-4t,2(t-9))^{T}$
for a scalar $t \in \mathbb{R} $. The sparsest solution occurs at $t=0$, where the sparsity of $\h x $ is 3 and some local solutions include  $t = 10 $ for sparsity being 4 and $t = 9 $ for sparsity being 5.  In  \Cref{fig:toy}, we plot various objective functions with respect to $t$, including $L_1, L_p$ (for $p=1/2$), $L_1$-$L_2$, and TL1 (for $a=1$ as suggested in \cite{zhangX18}). Note that all these functions are not differentiable at the values of $t = 0,9,$ and $10$, where the sparsity of $\h x$ is strictly smaller than 6. The sparsest vector $\h x$ corresponding to $t = 0$  can only be found by minimizing TL1  and $L_1/L_2 $, while the other models find $t = 10$ as a global minimum. 

\begin{figure}
	\centering 
	\subfloat[$L_1$]{\label{fig:toy_l1}\includegraphics[width=0.33\textwidth]{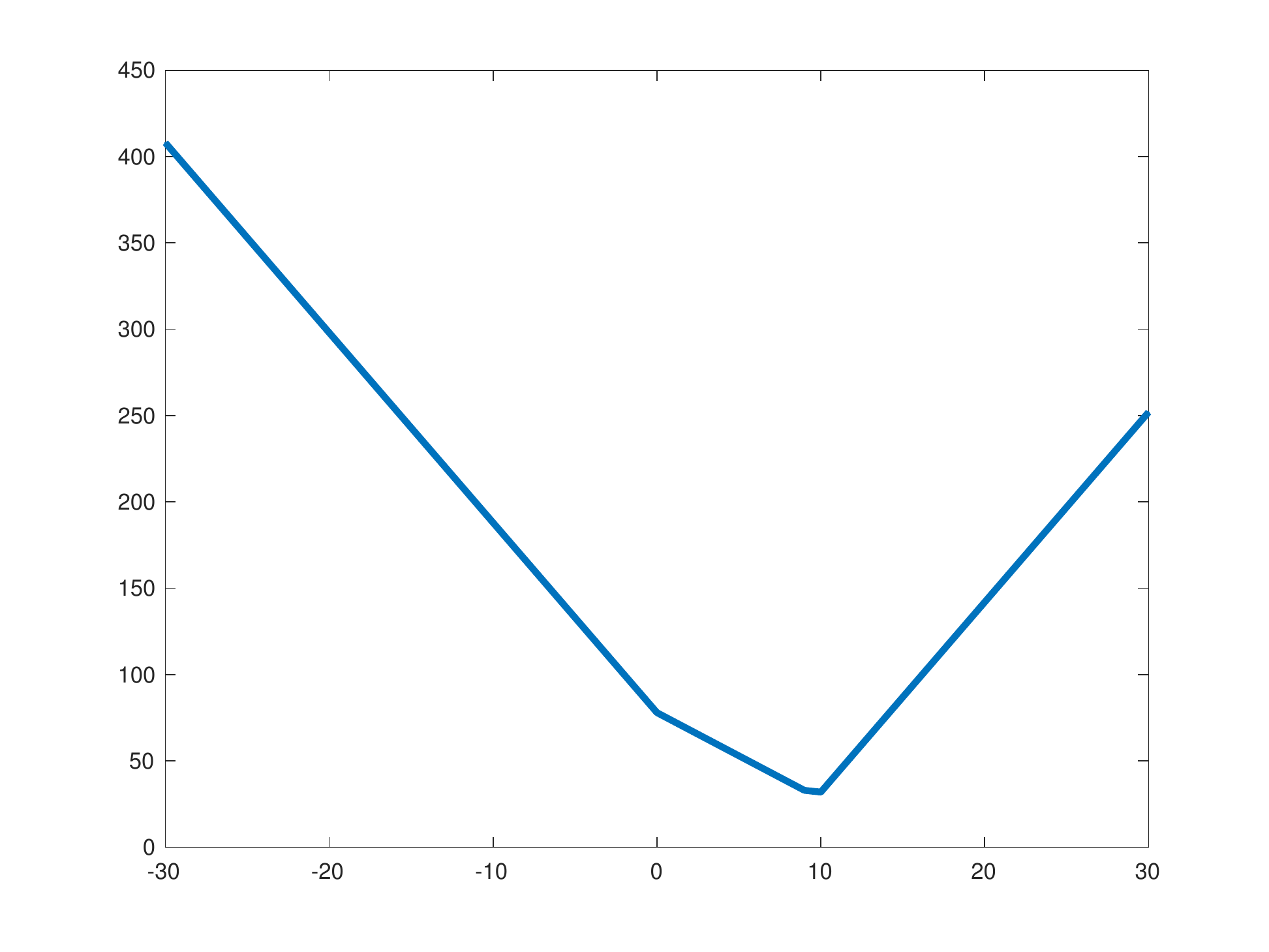}} 
		\subfloat[$L_p$ (p=1/2)]{\label{fig:toy_lp}\includegraphics[width=0.33\textwidth]{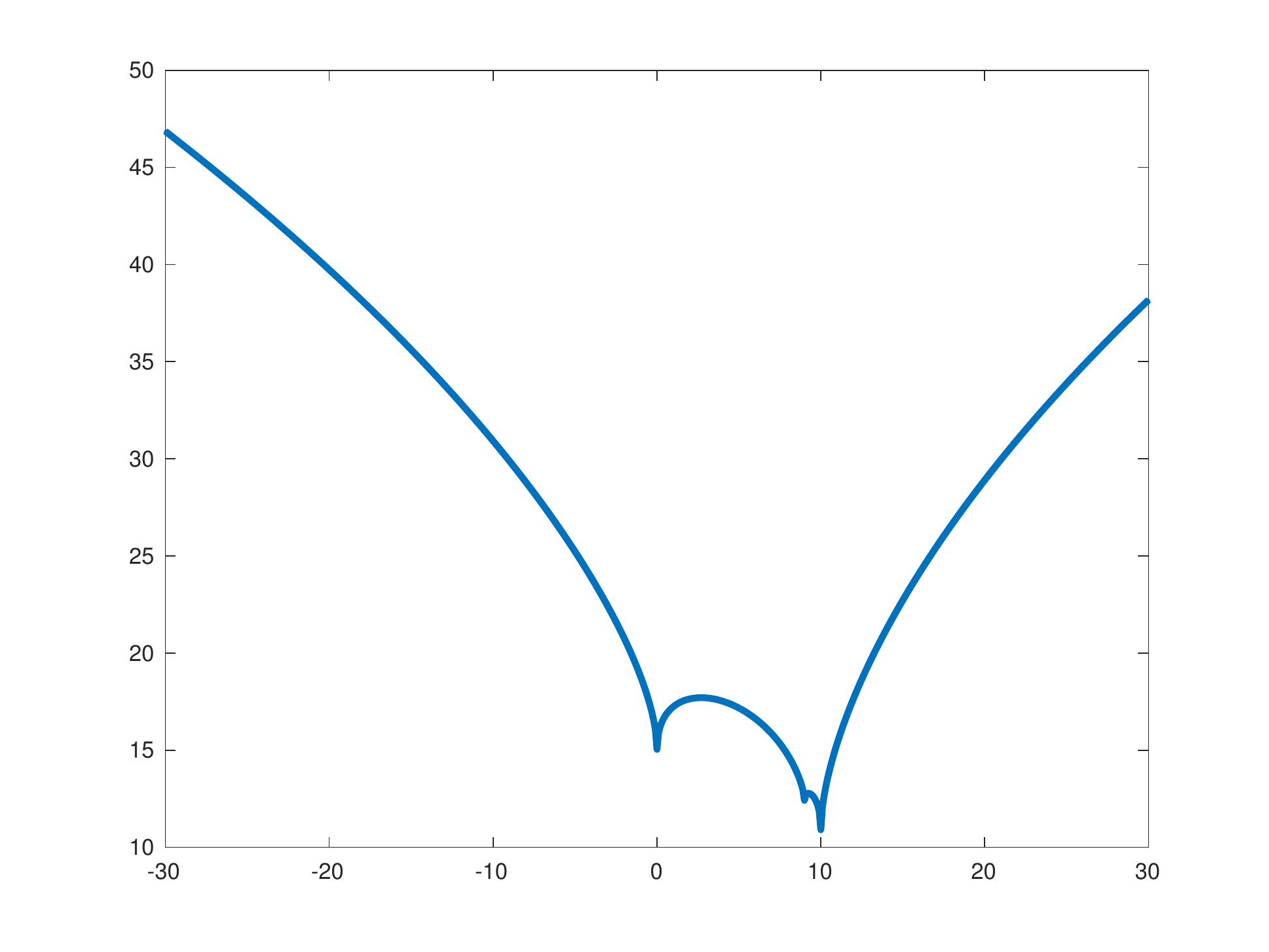}}\\
	\subfloat[TL1]{\label{fig:toy_tl1}\includegraphics[width=0.33\textwidth]{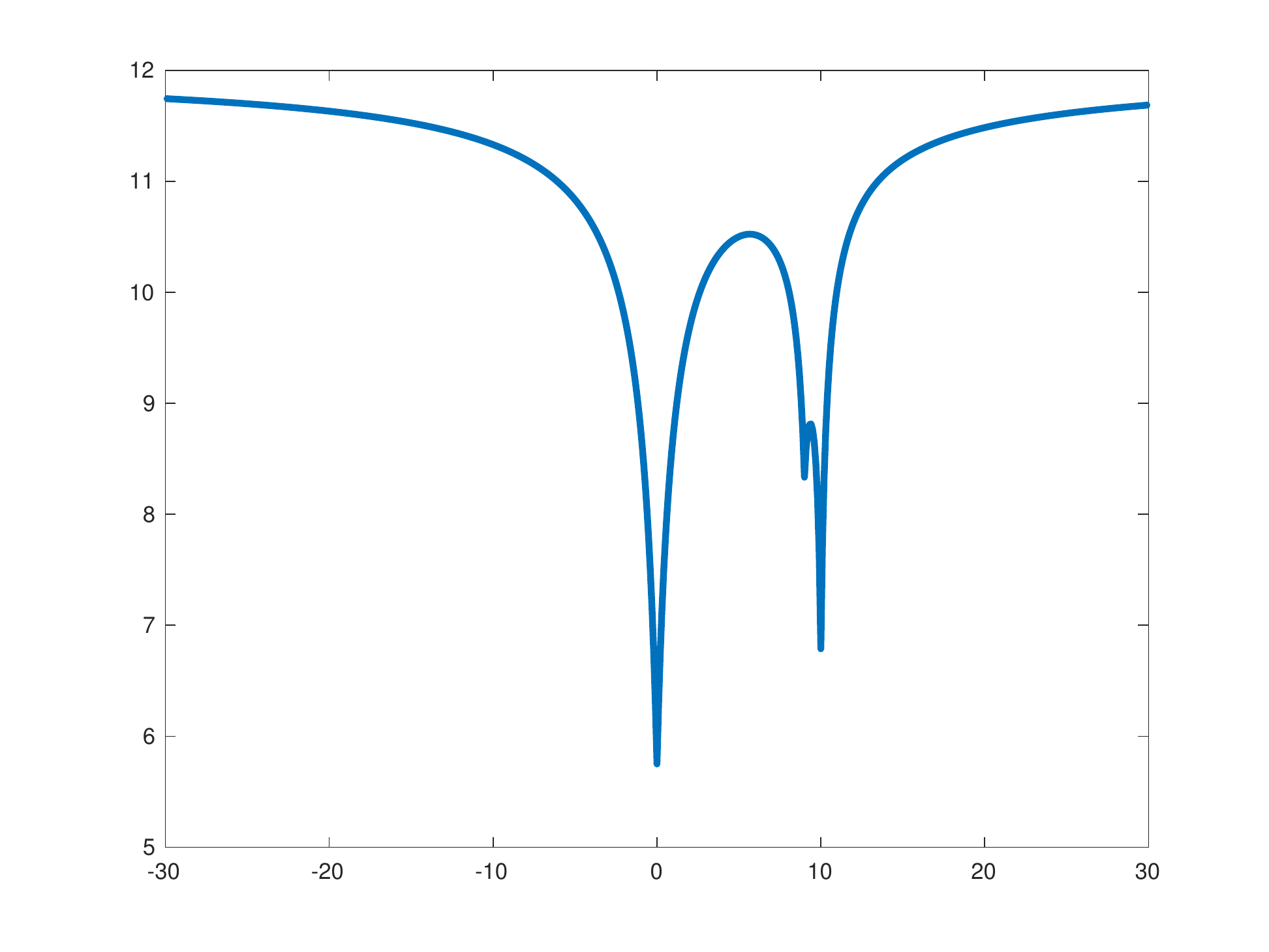}}
	\subfloat[$L_1$-$L_2$]{\label{fig:toy_l1ml2}\includegraphics[width=0.33\textwidth]{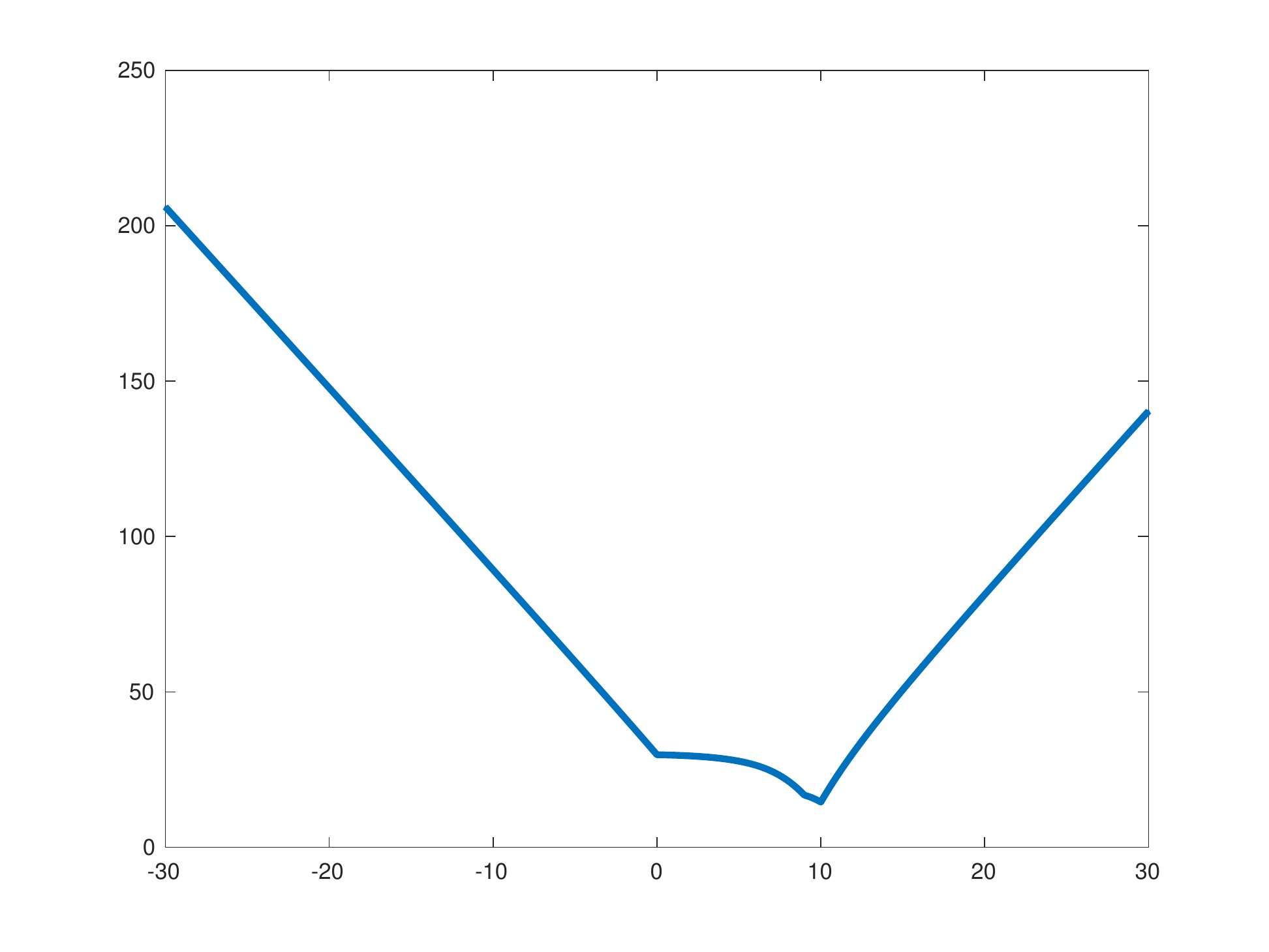}}
	\subfloat[$L_1/L_2$]{\label{fig:toy_l1dl2}\includegraphics[width=0.33\textwidth]{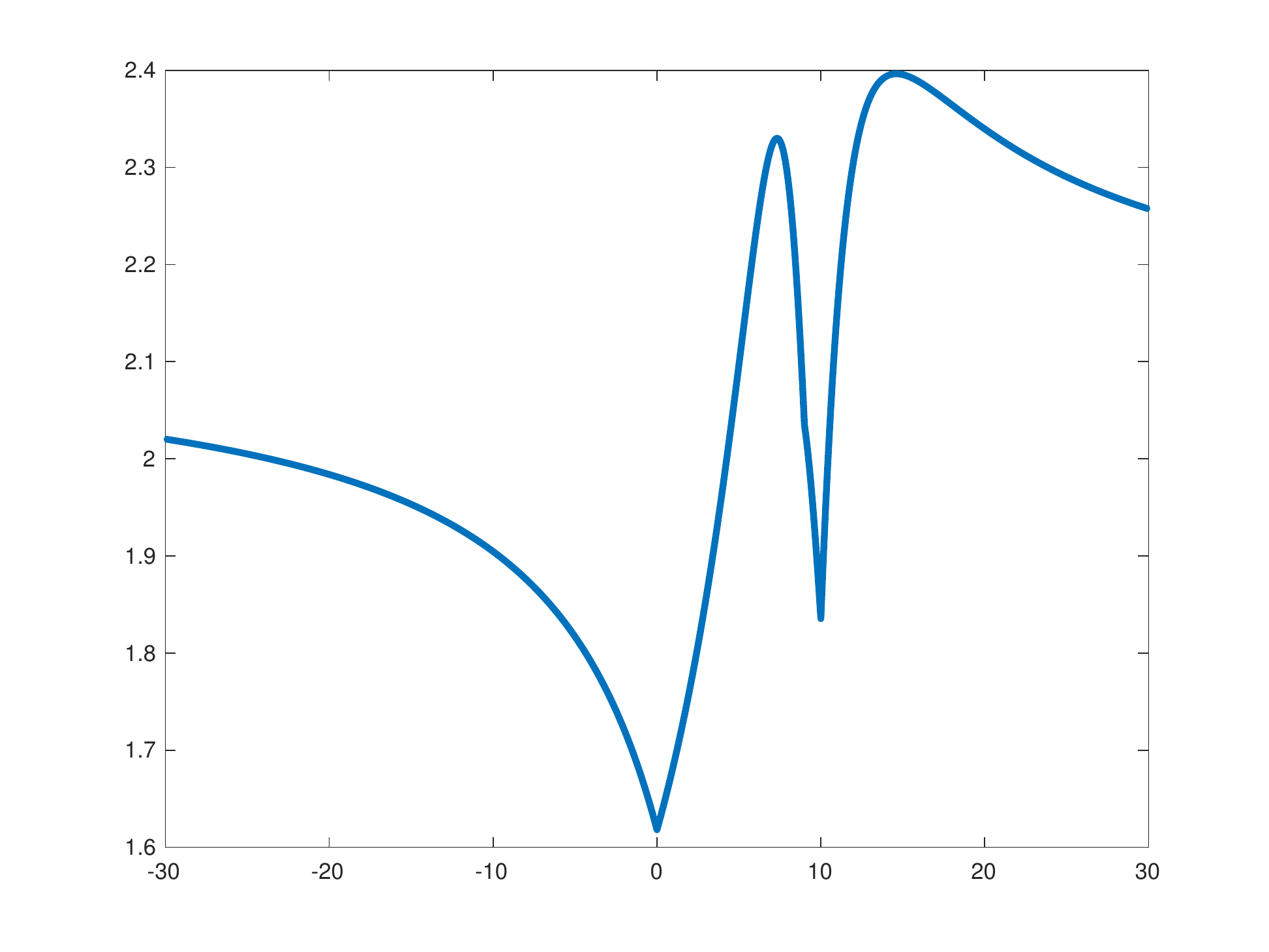}}
	\caption{The objective functions of a toy example illustrate that only $L_1/L_2$ and TL1 can find $t=0$ as the global minimizer, but TL1 has a very narrow basin of attraction (thus sensitive to initial guess and  difficult to find the global solution.). }
	\label{fig:toy}
\end{figure}

\subsection{Theoretical properties}

Recently, Tran and Webster \cite{tran2017unified} generalized the NSP  to deal with sparse promoting metrics that are symmetric, separable and concave, which unfortunately  does not apply to $L_1/L_2$ (not separable), but this work motivates us to  consider a stronger form of the NSP, as defined in \Cref{def:sNSP}.

\begin{definition}\label{def:sNSP}
	For any matrix $A \in \mathbb{R}^{m \times n} $, we say the matrix $A$ satisfies  a strong null space property  (sNSP)  of order $s$ if
	\begin{equation}
	(s+1)\left\|\h v_S\right\|_1 \leq \left\|\h v_{\bar{S}}\right\|_1, \ \h v \in \ker(A)  \backslash \{ \h 0 \}, \ \forall S \subset [n], \ |S| \leq s.
	\end{equation}
\end{definition}

Note that \Cref{def:sNSP} is stronger than the original NSP in \Cref{def:NSP} in the sense that  if a matrix satisfies sNSP then it  also satisfies  the original NSP.
The following theorem says that any $s$-sparse vector is a local minimizer of $L_1/L_2$ provided the matrix has the sNSP of order $s$. The proof is given in Appendix.

\begin{theorem}\label{thm:sNSP}
	Assume an $m\times n$ matrix $A$  satisfies the sNSP of order $s,$ then any $s$-sparse solution of $A\h x = \h b $ ($\h b \neq \h 0 $)  is a local minimum for $L_1/L_2$ in the feasible space of $A\h x =\h b $. i.e., there exists a positive number $t^* > 0 $ such that for every $\h v\in \ker(A) $ with $0<\|\h v\|_2\leq t^*$ we have
	\begin{equation}
	\frac{\|\h x\|_1}{\|\h x\|_2} \leq  \frac{\|\h x+\h v\|_1}{\|\h x+\h v\|_2}.
	\end{equation}
\end{theorem}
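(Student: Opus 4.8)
The plan is to turn the ratio inequality into a scalar sign question along each feasible line and then let the sNSP control the leading term. First I would set $S=\mathrm{supp}(\h x)$, so $|S|\le s$ and $\h x_{\bar{S}}=\h 0$; since $\h b\neq\h 0$ we have $\h x\neq\h 0$ and $\|\h x\|_2>0$. Every feasible competitor is $\h x+\h v$ with $\h v\in\ker(A)$, and I would write $\h v=t\h w$ where $\|\h w\|_2=1$ and $t=\|\h v\|_2>0$. For $t<x_{\min}:=\min_{i\in S}|x_i|$ no coordinate of $\h x_S+t\h w_S$ changes sign (because $\|\h w\|_\infty\le\|\h w\|_2=1$), so I can expand exactly
\[
\|\h x+\h v\|_1=\|\h x\|_1+t\big(\langle\sgn(\h x),\h w_S\rangle+\|\h w_{\bar{S}}\|_1\big),\qquad \|\h x+\h v\|_2^2=\|\h x\|_2^2+2t\langle\h x,\h w\rangle+t^2 .
\]
As the claimed inequality is equivalent to $\phi(t):=\|\h x\|_2\,\|\h x+\h v\|_1-\|\h x\|_1\,\|\h x+\h v\|_2\ge0$, and $\phi(0)=0$, it remains to control $\phi$ for small $t>0$.

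A short computation (equivalently, squaring $\|\h x\|_2\|\h x+\h v\|_1\ge\|\h x\|_1\|\h x+\h v\|_2$, cancelling $\|\h x\|_1^2\|\h x\|_2^2$, and dividing by $t$) gives $\phi'(0)=\|\h x\|_2^2(a+c)-\|\h x\|_1 d=:P(\h w)$, with $a=\langle\sgn(\h x),\h w_S\rangle$, $c=\|\h w_{\bar{S}}\|_1$ and $d=\langle\h x,\h w\rangle=\langle\h x_S,\h w_S\rangle$. The core of the argument is to prove $P(\h w)>0$ for every unit $\h w\in\ker(A)$. I would chain three estimates: from $a\ge-\|\h w_S\|_1$ and the sNSP bound $c=\|\h w_{\bar{S}}\|_1\ge(s+1)\|\h w_S\|_1$ we get $a+c\ge s\|\h w_S\|_1$; Cauchy--Schwarz and $\|\cdot\|_2\le\|\cdot\|_1$ give $d\le\|\h x\|_2\|\h w_S\|_1$; and $\|\h x\|_1\le\sqrt{|S|}\,\|\h x\|_2\le s\|\h x\|_2$ since $s\ge1$. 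Combining, $P(\h w)\ge\|\h x\|_2\|\h w_S\|_1\big(s\|\h x\|_2-\|\h x\|_1\big)\ge0$. For strict positivity: if $\|\h w_S\|_1=0$ then $a=d=0$ while $c=\|\h w\|_1>0$, so $P(\h w)=\|\h x\|_2^2c>0$; if $\|\h w_S\|_1>0$, the bounds $a\ge-\|\h w_S\|_1$ and $d\le\|\h x\|_2\|\h w_S\|_1$ cannot both be equalities, because equality in the latter forces Cauchy--Schwarz equality $\h w_S=\lambda\h x_S$ with $\lambda>0$ (hence $w_ix_i>0$ on $S$), contradicting the $w_ix_i\le0$ required by the former, so at least one estimate is strict and $P(\h w)>0$.

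Finally I would upgrade this to a single threshold $t^*$ valid for all $\h v$ simultaneously. The quantities $a,c,d$, hence $P$, are continuous on the compact unit sphere of $\ker(A)$, where $P>0$, so $P_{\min}:=\min P>0$; writing the squared inequality as $2\|\h x\|_1P(\h w)+t\,Q(\h w)\ge0$ with $Q(\h w)=\|\h x\|_2^2(a+c)^2-\|\h x\|_1^2$ bounded on the sphere, the choice $t^*=\min\{x_{\min},\,2\|\h x\|_1P_{\min}/(1+\sup|Q|)\}$ makes the inequality hold in every direction at once. I expect this strictness-plus-uniformity step to be the main obstacle: the crude bound on $P$ vanishes on thin sets of directions (e.g.\ $\h w_S=\h 0$, or the borderline $s=1$, $|S|=1$ case), so one cannot simply read off positivity from the chained inequalities but must exploit the incompatibility of the sign and Cauchy--Schwarz equality conditions, and only then pass to a uniform $t^*$ by compactness rather than handling one direction at a time.
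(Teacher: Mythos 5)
Your proof is correct and follows the same overall strategy as the paper's: restrict to $t<\min_{i\in S}|x_i|$ so that $\|\h x+t\h w\|_1$ linearizes exactly, reduce the claim to positivity of the leading coefficient $P(\h w)=\|\h x\|_2^2(a+c)-\|\h x\|_1\langle \h x,\h w\rangle$ (which is exactly the paper's $\sigma_{t}(\h v)\|\h x\|_2^2-\langle\h v_S,\h x\rangle\|\h x\|_1$ for $t>0$), and invoke the sNSP at the same point to get $a+c\geq s\|\h w_S\|_1$. Where you genuinely diverge is in the two hardest technical steps. First, you prove strict positivity of $P$ by an equality-case argument: equality in $d\leq\|\h x\|_2\|\h w_S\|_1$ forces $\h w_S=\lambda\h x_S$ with $\lambda>0$, hence $w_ix_i>0$ on $S$, which is incompatible with the $w_ix_i\leq 0$ required for equality in $a\geq-\|\h w_S\|_1$; the paper instead runs an explicit two-case computation ($s=1$ versus $s>1$, with a further split on whether some entry of $\h v_S$ exceeds $\tfrac{1}{2s+2}$) that yields concrete lower bounds. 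Second, you obtain a single $t^*$ valid for all directions by compactness of the unit sphere of $\ker(A)$, whereas the paper defines $t_1$ as an infimum over directions and shows $t_1>0$ from those explicit bounds. Your route is shorter and cleaner, and working with the squared difference $t\left[2\|\h x\|_1P(\h w)+tQ(\h w)\right]$ sidesteps the paper's differentiation of $g(t)=\|\h x+t\h v\|_1^2/\|\h x+t\h v\|_2^2$ and its discussion of one-sided derivatives at $t=0$; what the paper's version buys is an explicit, quantitative expression for $t^*$ in terms of $\h x$, $s$ and $n$ rather than an abstract compactness constant.
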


Finally, we show the optimal value of the $L_1/L_2$ subject to $A\h x=\h b$ is upper bounded by the same ratio with $\h b= \h 0$; see \Cref{prop:ratio}.  

\begin{prop}\label{prop:ratio}
	For any $A\in \mathbb{R}^{m\times n}, \h x \in \mathbb{R}^{n}, $ we have
	\begin{equation}\label{eq:prop}
	\inf_{\h z \in \Rbb^n} \left\lbrace \frac{\|\h z\|_1}{\|\h z\|_2} \,\middle| \, A \h z =A\h x \right\rbrace \leq \inf_{\h z \in \Rbb^n} \left\lbrace \frac{\|\h z\|_1}{\|\h z\|_2} \,\middle| \, \h z \in \ker(A) \setminus \{\h 0\} \right\rbrace. 
	\end{equation}
\end{prop}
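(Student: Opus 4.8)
The plan is to exploit the scale invariance of the ratio $\|\cdot\|_1/\|\cdot\|_2$ together with the observation that the feasible set on the left-hand side is precisely the affine subspace $\h x + \ker(A)$. Writing $\h b := A\h x$, every point of the form $\h x + t\h v$ with $\h v \in \ker(A)$ and $t \in \Rbb$ is feasible, since $A(\h x + t\h v) = A\h x = \h b$. The idea is that by pushing such a point far along a fixed null-space direction, its ratio converges to the ratio of that direction, so the feasible infimum cannot exceed the ratio of any single null-space vector, and hence cannot exceed the null-space infimum.

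Concretely, I would fix any $\h v \in \ker(A)\setminus\{\h 0\}$ and consider the feasible points $\h x + t\h v$ for $t > 0$. Using scale invariance of the ratio (dividing numerator and denominator by $t>0$),
$$\frac{\|\h x + t\h v\|_1}{\|\h x + t\h v\|_2} = \frac{\|\h v + \h x/t\|_1}{\|\h v + \h x/t\|_2}.$$
As $t \to \infty$ we have $\h x/t \to \h 0$, and since $\|\h v\|_2 > 0$ the denominator stays bounded away from zero, so continuity of the two norms gives
$$\lim_{t \to \infty}\frac{\|\h x + t\h v\|_1}{\|\h x + t\h v\|_2} = \frac{\|\h v\|_1}{\|\h v\|_2}.$$
Since the left-hand infimum in \eqref{eq:prop} is at most the ratio evaluated at each feasible point $\h x + t\h v$, letting $t\to\infty$ yields
$$\inf_{\h z}\left\{\frac{\|\h z\|_1}{\|\h z\|_2} \,\middle|\, A\h z = \h b\right\} \le \frac{\|\h v\|_1}{\|\h v\|_2}.$$

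Since this holds for every $\h v \in \ker(A)\setminus\{\h 0\}$, I would conclude by taking the infimum of the right-hand side over all such $\h v$, which is exactly the right-hand side of \eqref{eq:prop}. The argument is short and essentially elementary; the only points requiring care are bookkeeping ones. First, one must ensure the ratio is well defined along the sequence, which holds because $\|\h x + t\h v\|_2 > 0$ for all large $t$ when $\h v \neq \h 0$. Second, degenerate cases are benign: if $\ker(A) = \{\h 0\}$ the right-hand infimum is $+\infty$ and the inequality is vacuous, while if $\h b = \h 0$ the two sides coincide. I do not anticipate a genuine obstacle here; the substance of the statement is the scale-invariance limit, and the role of the proof is simply to record that the blow-up direction realizes the null-space ratio in the limit.
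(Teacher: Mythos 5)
Your proposal is correct and follows essentially the same route as the paper's own proof: bound the feasible infimum by the ratio at $\h x + t\h v$, let $t\to\infty$ using scale invariance to recover $\|\h v\|_1/\|\h v\|_2$, and then take the infimum over $\h v\in\ker(A)\setminus\{\h 0\}$. The extra remarks on well-definedness and degenerate cases are harmless additions but do not change the argument.
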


\begin{proof}
	Denote 
	\begin{equation}\label{eq:l1dl2_alpha}
	\alpha^* = \inf_{\h z \in \Rbb^n} \left\lbrace \frac{\|\h z\|_1}{\|\h z\|_2} \,\middle| \, A \h z =A\h x \right\rbrace.
	\end{equation}
	For every $\h v \in \ker(A) \setminus \{\h 0\} $ and  $ t \in \Rbb $, we have that
	\begin{equation}
	\alpha^*  \leq  \frac{\|\h x + t \h v\|_1}{\|\h x + t \h v\|_2},
	\end{equation}
	since $A(\h x + t \h v) = \h b$. Then we obtain
	\begin{equation}
	\lim_{t \rightarrow \infty} \frac{\|\h x + t \h v\|_1}{\|\h x + t \h v\|_2} = \lim_{t \rightarrow \infty} \frac{\|\h x/t + \h v\|_1}{\|\h x/t + \h v\|_2} = \frac{\| \h v\|_1}{\| \h v\|_2}. 
	\end{equation}
	Therefore, for every $\h v \in \ker(A) \setminus \{\h 0\}, $ 
	\begin{equation}
	\alpha^*  \leq  \frac{\|\h v\|_1}{\|\h v\|_2}, 
	\end{equation}
	which directly leads to the desired inequality \eqref{eq:prop}.  
\end{proof}

\Cref{prop:ratio} implies that the left-hand-side of the inequality involves both the underlying signal $\h x$ and the system matrix $A$, which can be upper bounded by the minimum ratio that only involves  $A$. 

\section{Numerical schemes} \label{sect:algorithms}
The proposed model is 
\begin{equation}\label{equ:l1dl2_model}
\min\limits_{\mathbf{x}\in \mathbb{R}^n} \ \ \left\{ \frac{\|\h x\|_1}{ \|\h x\|_2} +  I_0(A\h x-\h b) \right\},
\end{equation}
where $I_{S}(\h t)$ is the function enforcing $\h t$ into the feasible set $S$, i.e., 
	\begin{equation}\label{eq:indicator}
	I_S(\h t) = 
	\begin{cases}
	0	&	 \h t \in S,
	\\
	+\infty	&	\text{otherwise}.
	\end{cases}
	\end{equation}
In  \Cref{sect:l1dl2_admm}, we detail the ADMM algorithm for minimizing \eqref{equ:l1dl2_model}, followed by a minor change to incorporate  additional box constraint in \Cref{sect:box}. 
We discuss in \Cref{sect:l1dl2_gradient} another variant of $L_1/L_2$ on the gradient to deal with imaging applications. 

\subsection{The $L_1/L_2$ minimization via ADMM}\label{sect:l1dl2_admm}

In order to apply the ADMM \cite{boydPCPE11admm} to solve for \eqref{equ:l1dl2_model}, we  introduce two auxiliary variables and  rewrite \eqref{equ:l1dl2_model} into an equivalent form,
\begin{equation}\label{eq:l1dl2_admm}
\min_{\h x,\h y,\h z} \ \ \left\{ \frac{\|\h z\|_1}{ \|\h y\|_2} +  I_0(A\h x-\h b) \right\} \quad \mathrm{s.t.} \ \ \h x =\h y, \ \ \h x =\h z.
\end{equation}
The augmented Lagrangian for \eqref{eq:l1dl2_admm} is 
\begin{eqnarray}
L_{\rho_1,\rho_2}(\h x,\h y,\h z;\h v,\h w) &=& \frac{\|\h z\|_1}{ \|\h y\|_2} + I_0(A\h x-\h b)+   \left\langle \h v, \h x-\h y \right\rangle + \frac{\rho_1}{2}\|\h x-\h y\|_2^2 +  \left\langle \h w, \h x-\h z \right\rangle + \frac{\rho_2}{2}\|\h x-\h z\|_2^2
\notag\\ &=
&\frac{\|\h z\|_1}{ \|\h y\|_2} + I_0(A\h x-\h b)+  \frac{\rho_1}{2}\left\|\h x-\h y + \frac{1}{\rho_1} \h v\right\|_2^2 +   \frac{\rho_2}{2}\left\|\h x-\h z + \frac{1}{\rho_2} \h w \right\|_2^2.
\end{eqnarray}

The ADMM consists of the following five steps:
\begin{equation}\label{ADMM_contrainted}
\left\{\begin{array}{l}
 \h x^{(k+1)} := \arg\min\limits_{\h x}L_{\rho_1,\rho_2}(\h x,\h y^{(k)},\h z^{(k)};\h v^{(k)},\h w^{(k)}),\\
\h y^{(k+1)} := \arg\min\limits_{\h y}L_{\rho_1,\rho_2}(\h x^{(k+1)},\h y,\h z^{(k)};\h v^{(k)},\h w^{(k)}),\\
\h z^{(k+1)} := \arg\min\limits_{\h z}L_{\rho_1,\rho_2}(\h x^{(k+1)},\h y^{(k+1)},\h z;\h v^{(k)},\h w^{(k)}),\\
\h v^{(k+1)} := \h v^{(k)}+\rho_1(\h x^{(k+1)}-\h y^{(k+1)}),\\
\h w^{(k+1)} := \h w^{(k)}+\rho_2(\h x^{(k+1)}-\h z^{(k+1)}).
\end{array}\right.
\end{equation}


The update for $\h x$ is  a projection to the affine space of $A\h x=\h b$, 
\begin{equation*}
\begin{split}
\h x^{(k+1)} := &  \arg\min_{\h x}L_{\rho_1,\rho_2}(\h x,\h y^{(k)},\h z^{(k)};\h v^{(k)},\h w^{(k)})
\\ =&
\arg\min_{\h x} \left\{\frac{\rho_1 + \rho_2}{2}\left\|\h x- \h f^{(k)} \right\|_2^2 \ \ \mathrm{s.t.} \ \ A\h x=\h b \right\}
\\
= & \left( I - A^{T}(AA^{T})^{-1}A \right)  \h f^{(k)}  + A^{T}(AA^{T})^{-1}\h b,
\end{split}
\end{equation*}
where
\begin{equation}
\h f^{(k)} = \frac{\rho_1}{\rho_1 + \rho_2}\left(\h y^{(k)} - \frac{1}{\rho_1} \h v^{(k)}\right) + \frac{\rho_2}{\rho_1 + \rho_2}
\left(\h z^{(k)} - \frac{1}{\rho_2} \h w^{(k)}\right).
\end{equation}

As for the $\h y$-subproblem,  let $c^{(k)} = \|\h z^{(k)}\|_1$ and $\h d^{(k)} = \h x^{(k+1)} + \frac {\h v^{(k)}}{ \rho_1}$ and the minimization subproblem
 reduces to
\begin{equation}
\h y^{(k+1)} = \arg\min_{\h y} \frac{c^{(k)}} { \|\h y\|_2} + \frac{\rho_1}{2} \|\h y-\h d^{(k)}\|_2^2.
\end{equation}
If $\h d^{(k)} = 0 $ then any vector $\h  y $ with $\|\h y\|_2 = \sqrt[3]{\frac{c^{(k)}}{\rho_1}} $ is a solution to the minimization problem. If $c^{(k)} = 0 $ then $\h y =\h  d^{(k)} $ is the solution. Now we consider $\h d^{(k)} \neq 0 $ and $ c^{(k)} \neq 0 $. By taking derivative of the objective function with respect to $\h y$, we obtain
\[
\left(-\frac {c^{(k)} }{\|\h y\|_2^3} + \rho_1\right)\h y = \rho_1 \h d^{(k)}. 
\]
As a result,  there exists a positive number $\tau^{(k)} \geq 0$ such that $
\h y = \tau^{(k)} \h d^{(k)}.$ Given $\h d^{(k)}$, we denote $\eta^{(k)} = \|\h d^{(k)}\|_2$. For $\eta^{(k)}>0$, finding $\h y$ becomes a one-dimensional search for the parameter $\tau^{(k)}$. In other words, if we take $D^{(k)} = \frac{c^{(k)}}{\rho_1 (\eta^{(k)})^3},$ then $\tau^{(k)}$ is a root of
\[
\underbrace{ \tau^3 - \tau^2 - D^{(k)}}_{F(\tau)} = 0.
\]
The cubic-root formula suggests that   $F(\tau) = 0$ has only one real root, which can be found by the following closed-form solution.
\begin{equation}\label{eq:update_root}
\tau^{(k)} = \frac{1}{3} + \frac{1}{3}(C^{(k)} + \frac{1}{C^{(k)}}),  \ \mbox{with} \ C^{(k)} = \sqrt[3]{ \frac{27D^{(k)} + 2 + \sqrt{(27D^{(k)}+2)^2 - 4}}{2} }. 
\end{equation}
In summary, we have
\begin{equation}
\h y^{(k+1)} =
\begin{cases}
\h e^{(k)}	& \h d^{(k)} = 0,
\\
\tau^{(k)} \h d^{(k)} & \h d^{(k)} \neq 0,
\end{cases}
\end{equation}
where $\h e^{(k)} $ is a random vector with the $L_2$ norm to be $\sqrt[3]{\frac{c^{(k)}}{\rho_1}} $.

Finally, the ADMM update for $\h z $ is 
\begin{equation}\label{eq:l1dl2admm_z}
\h z^{(k+1)} = \mathbf{shrink}\left(\h x^{(k+1)} + \frac{\h w^{(k)}}{\rho_2}, \frac{1}{\rho_2 \|\h y^{(k+1)} \|_2}\right), 
\end{equation}
where $\mathbf{shrink}$ is often referred to as \textit{soft shrinkage} operator,
\begin{equation}
\label{eq:shrinkage}
	\mathbf{shrink}(\h v, \mu)_i = \mathrm{sign}(v_i)\max\left(|v_i|-\mu, 0\right), \quad i = 1, 2, \dots, n. 
\end{equation}
We summarize the ADMM algorithm for solving the $L_1/L_2$ minimization problem in \Cref{alg:l1dl2ADMM}. 
\begin{algorithm}
\caption{The $L_1/L_2$ minimization via ADMM. }\label{alg:l1dl2ADMM}
\begin{algorithmic}[0]
\STATE{Input: $A\in \mathbb{R}^{m\times n}, \h b\in \mathbb{R}^{m\times 1}$, Max and $ \epsilon \in \mathbb{R}$}
\WHILE{$k < $ Max or $\|\h x^{(k)}-\h x^{(k-1)}\|_2/\|\h x^{(k)}\| > \epsilon$}
\STATE{$ \h x^{(k+1)} = \left( I - A^{T}(AA^{T})^{-1}A \right)  \h f^{(k)}  + A^{T}(AA^{T})^{-1}\h b$}
\STATE{$\h y^{(k+1)} =
 \begin{cases}
\h e^{(k)}	& \h d^{(k)} = 0
\\
\tau^{(k)} \h d^{(k)} & \h d^{(k)} \neq 0
\end{cases}$ }
\STATE{$\h z^{(k+1)} = \mathbf{ shrink}\left(\h x^{(k+1)} + \frac{\h w^{(k)}}{\rho_2}, \frac{1}{\rho_2 \|\h y^{(k+1)} \|_2}\right) $}
\STATE{$\h v^{(k+1)} = \h v^{(k)}+\rho_1(\h x^{(k+1)}-\h y^{(k+1)})$}
\STATE{$\h w^{(k+1)} = \h w^{(k)}+\rho_2(\h x^{(k+1)}-\h z^{(k+1)})$}
\STATE{k = k+1}
\ENDWHILE
\RETURN $\h x^{(k)}$ \end{algorithmic} 
\end{algorithm}

\begin{remark}
		We can pre-compute the matrix $I - A^{T}(AA^{T})^{-1}A $ and the vector $A^{T}(AA^{T})^{-1}\h b$  in \Cref{alg:l1dl2ADMM}. The complexity is $O(m^2n)$ for the pre-computation including the matrix-matrix multiplication and Cholesky decomposition for solving linear system. In each iteration, we need to do matrix-vector multiplication for the $\h x$-subproblem, which is in the order of $O(n^2)$. In the $\h y$-subproblem, the rooting finding is one-dimensional search, whose cost can be neglected. The $\h z$-subproblem is pixel-wise shrinkage operation and only takes $O(n)$.  In summary, the  computation complexity for each iteration is $O(n^2)$.   We can consider the parallel computing to further speed up, thanks to the separation of the $\h z$-subproblem.
	\end{remark}

\subsection{$L_1/L_2$ with box constraint}\label{sect:box}
The $L_1/L_2$ model has an intrinsic drawback that tends to produce one erroneously large coefficient while suppressing the other non-zero elements, under which case the ratio is reduced. To compensate for this drawback, it is helpful to incorporate a box constraint, if we know lower/upper bounds of the underlying signal \textit{a priori}. Specifically, we propose
\begin{equation}\label{equ:l1dl2_box}
\min\limits_{\h x\in \mathbb{R}^n} \ \ \left\{ \frac{\|\h x\|_1}{ \|\h x\|_2} +  I_0(A\h x-\h b)\middle| \ \h x\in[c,d] \right\},
\end{equation}
which is referred to as  $L_1/L_2$-box. Similar to \eqref{eq:l1dl2_admm}, we look at the following form that enforces  the box constraint on variable $\h z$, 
\begin{equation}\label{eq:l1dl2_admm_box}
\min_{\h x,\h y,\h z} \ \ \left\{ \frac{\|\h z\|_1}{ \|\h y\|_2} +  I_0(A\h x-\h b) \right\} \quad \mathrm{s.t.} \ \ \h x =\h y, \ \ \h x =\h z, \ \ \h z\in [c,d].
\end{equation}
The only change we need to make by adapting \Cref{alg:l1dl2ADMM} to the $L_1/L_2$-box is 
the $\h z$ update. The $\h z$-subproblem in \eqref{ADMM_contrainted} with the box constraint is 
\begin{equation}
\label{equ:sub_p_z}
\min_{\h z}\frac 1{\|\h y^{(k+1)}\|_2} \|\h z\|_1 + \frac {\rho_2} 2 \|\h x^{(k+1)}-\h z+\frac 1 {\rho_2} \h w^{(k)}\|_2^2 \quad  \mbox{s.t.} \quad  \h z \in [c,d].
\end{equation}
For a convex problem \eqref{equ:sub_p_z} involving the $L_1$ norm, it has a closed-form solution given by the soft shrinkage, followed by projection to the interval $[c, d]$. 
In particular, simple calculations show that
\begin{equation}
		z^{(k+1)}_i = \min\left\{\max(\hat{z}_i, c), d\right\}, \quad i = 1, 2, \dots, n,  
	\end{equation}
where $\hat{\h z} =  \mathbf{shrink}\left( \h r, \nu \right)$, $\h r = \h x^{(k+1)} + \frac{\h w^{(k)}}{\rho_2}$ and $\nu = \frac{1}{\rho_2 \|\h y^{(k+1)}\|}$. 
If the box constraint $[c,d]$ is symmetric, i.e., $c = -d$ and $d>0$, it follows from \cite{beck2017first} that  the update for $\h z$ can be expressed as
\begin{equation}
		z^{(k+1)}_i = \mathrm{sign}(v_i) \min \left\{\max(|r_i|-\nu, 0), d \right\}, \quad i = 1, 2, \dots, n. 
	\end{equation}

\begin{remark}
	 The existing literature on the ADMM convergence  \cite{guo2017convergence,hong2016convergence,li2015global,pang2018decomposition,wang2018convergences,wang2014convergence,wang2019global} requires the existence of one separable function in the objective function, whose gradient is Lipschitz continuous.  Obviously, $L_1/L_2$ does not satisfy this assumption, no matter with or without the box constraint. Therefore, we have difficulties in analyzing the convergence theoretically. Instead, we show the convergence empirically in \Cref{sect:experiments} by plotting  residual errors and objective functions, which gives strong supports for theoretical analysis in the future.
	\end{remark}

\subsection{$L_1/L_2$ on the gradient}\label{sect:l1dl2_gradient}

We adapt the $L_1/L_2$ model to apply on the gradient, which enables us to deal with imaging applications. Let $u\in\mathbb R^{n\times m}$ be an underlying image of size $n\times m$. Denote $A$ as a linear operator that models a certain degradation process to obtain the measured data $f$. For example, $A$ can be a subsampling operator in the frequency domain and recovering $u$ from $f$  is called MRI reconstruction. In short, the proposed gradient model is given by
\begin{equation}\label{eq:grad_con}
\min_{u\in\mathbb R^{n\times m}}  \frac{\| \nabla u \|_1}{\| \nabla u \|_2} \quad \mathrm{s.t.} \ \  A u =f, \ u\in [0, 1],
\end{equation}
where $\nabla$ denotes discrete gradient operator $ \nabla u:=
\{[u_{ij}-u_{(i+1)j}]_{i=1}^{n}\}_{j=1}^m, \{[u_{ij}-u_{i(j+1)}]_{j=1}^{m}\}_{i=1}^n
$ with periodic boundary condition; hence the model is referred to as $L_1/L_2$-grad.
 Note that the box constraint $0\leq u\leq 1$ is a reasonable assumption in the MRI reconstruction problem.

To solve for \eqref{eq:grad_con}, we introduce three auxiliary variables $\h d, \h h,$ and $v$, leading to an equivalent problem,
\begin{equation}\label{eq:grad_con2}
\min_{u\in\mathbb R^{n\times m}}  \frac{\| \h d \|_1}{\| \h h \|_2} \quad \mathrm{s.t.} \ \  A u =f, \  \h d = \nabla u,\  \h h = \nabla u, u = v, 0\leq v\leq 1.
\end{equation}
Note that we denote $\h d$ and  $\h h$ in bold to indicate that they have two components corresponding to both $x$ and $y$ derivatives. 
The augmented Lagrangian  is expressed as 
\begin{equation}\label{eq:grad_aug}
\begin{split}
\mathcal{L}(u,\h d, \h h, v; w, \h b_1, \h b_2,e ) &= \frac{\|\h d\|_1 }{\| \h h\|_2} + \frac{\lambda}{2}\|Au - f-w\|_2^2 + \frac{\rho_1}{2} \|\h d-\nabla u-\h b_1 \|_2^2 \\ & + \frac{\rho_2}{2} \|\h h-\nabla u -\h b_2\|_2^2 + \frac{\rho_3}{2} \|v-  u - e\|_2^2 + I_{[0, 1]}(v),
\end{split}
\end{equation}
where $w, \h b_1, \h b_2, e$ are dual variables and $\lambda, \rho_1, \rho_2,\rho_3$ are positive parameters. 
The updates for $\h d,\h h$ are the same as Algorithm~\ref{alg:l1dl2ADMM}. Specifically for $\h h $, we consider $D^{(k)} = \frac{\|\h d \|_1}{\rho_2 \|\nabla u^{(k+1)} + \h g^{(k)}\|_2^3}$ and hence $\tau^{(k)} $ is the root of the same polynomial as in \eqref{eq:update_root}. By taking derivative of \eqref{eq:grad_aug} with respect to $u$, we can obtain the $u$-update, i.e.,
\begin{equation}\label{eq:grad_u_update}
\begin{split}
	u^{(k+1)} =& \left(\lambda A^TA-(\rho_1+\rho_2)\triangle + \rho_3 I\right)^{-1}\left(\lambda A^T (f+w^{(k)}) \right.\\ &\left.+ \rho_1 \nabla^T(\h d^{(k)} -\h b_1^{(k)})+\rho_2\nabla^T(\h h^{(k)}-\h b_2^{(k)})+\rho_3(v^{(k)}-e^{(k)})\right).
\end{split}
\end{equation}
Note for certain operator $A$, the inverse in the $u$-update \eqref{eq:grad_u_update} can be computed efficiently via the fast Fourier transform (FFT). The $v$-subproblem is a projection to an interval $[0, 1]$, i.e.,
\begin{equation}\label{equ:v_update}
v_{ij}^{(k+1)}=\min\left\{\max(u_{ij}^{(k+1)}+e_{ij}^{(k)}, 0), 1\right\}, \quad i = 1, 2, \dots, n, j = 1, 2, \dots, m.
\end{equation}
In summary, we present the ADMM algorithm for the  $L_1/L_2$-grad model in \Cref{alg:l1dl2gradADMM}.  
\begin{algorithm}
	\caption{The $L_1/L_2$-grad minimization via ADMM. }\label{alg:l1dl2gradADMM}
	\begin{algorithmic}[0]
		\STATE{Input: $f \in \mathbb{R}^{n\times m}$, $ A$, Max and $ \epsilon \in \mathbb{R}$.}
		\WHILE{$k < $ Max or $\| u^{(k)}-u^{(k-1)}\|_2/\|u^{(k)}\| > \epsilon$}
		\STATE{Solve $ u^{(k+1)}$ via \eqref{eq:grad_u_update}}
		\STATE{Solve $ v^{(k+1)}$ via \eqref{equ:v_update}}
		\STATE{$\h h^{(k+1)} =
			\begin{cases}
			\h e^{(k)}	& \nabla u^{(k+1)} + \h g^{(k)} = 0,
			\\
			\tau^{(k)} \left( \nabla u^{(k+1)} + \h g^{(k)} \right) & \nabla u^{(k+1)} + \h g^{(k)} \neq 0.
			\end{cases}$ }
		\STATE{$\h d^{(k+1)} = \mathbf{ shrink}\left(\nabla u^{(k+1)} + \h b^{(k)}, \frac{1}{\rho_1 \|\h h^{(k+1)} \|_2}\right) $}
		\STATE{$\h b^{(k+1)} = \h b^{(k)} + \nabla u^{(k+1)} - \h d^{(k+1)}$}
		\STATE{$\h g^{(k+1)} = \h g^{(k)} + \nabla u^{(k+1)} - \h h^{(k+1)}$}
		\STATE{$w^{(k+1)} = w^{(k)} + f - A u^{(k+1)}$}
		\STATE{$e^{(k+1)} = e^{(k)} + u^{(k+1)} - v^{(k+1)}$}
		\STATE{$k = k+1$}
		\ENDWHILE
		\RETURN $u^{(k)}$ \end{algorithmic} 
\end{algorithm}

\section{Numerical experiments}\label{sect:experiments}

In this section, we carry out a series of numerical tests
to demonstrate the performance of the proposed $L_1/L_2$ models together with its corresponding algorithms. 
All the numerical experiments are conducted on a standard desktop with CPU (Intel i7-6700, 3.4GHz) and $\mathrm{MATLAB \ 9.2 \ (R2017a)}. $

We consider two types of sensing matrices: one is called oversampled discrete cosine transform (DCT) and the other is Gaussian matrix. Specifically for the oversampled DCT, we follow the works of
\cite{DCT2012coherence,louYHX14,yinLHX14} to define  $A= [\h a_1, \h a_2, \dots, \h a_n]\in \mathbb{R}^{m\times n}$ with 
\begin{equation}\label{eq:oversampledDCT}
\h a_j := \frac{1}{\sqrt{m}}\cos \left(\frac{2\pi \h w j}{F} \right), \quad j = 1, \dots, n,
\end{equation}
where $\h w$ is a random vector uniformly distributed in $[0, 1]^m$ and $F\in \Rbb_+$  controls the coherence in a way that a larger  value of $F$ yields a more coherent matrix. 
In addition, we use $\mathcal N(\h 0,\Sigma)$ (the multi-variable normal distribution)  to generate Gaussian matrix, where the covariance matrix is $\Sigma = \{(1-r)*I(i=j)+r\}_{i,j}$ with a positive parameter $r$. This type of matrices is used in the TL1 paper \cite{zhangX18}, which mentioned that a larger $r$ value indicates a more difficult problem in sparse recovery. 
Throughout the experiments, we consider sensing matrices of size $64\times 1024$. 
The ground truth $\h x\in \mathbb{R}^{n}$ is simulated as $s$-sparse signal, where  $s$ is the total number of nonzero entries. The support of $\h x$ is a random index set and the values of non-zero elements follow Gaussian normal distribution i.e., $ (\h x_s)_i \sim \mathcal{N}(0,1), \quad i = 1,2,\dots, s.$ We then normalize the ground-truth signal to have maximum magnitude as 1 so that we can examine the performance of additional $[-1,1]$ box constraint.  

Due to the non-convex nature of the proposed $L_1/L_2$ model, the initial guess $\h x^{(0)}$ is very important and should be well-chosen. A typical choice is the $L_1$ solution \eqref{eq:l1}, which is used here. 
We adopt a commercial optimization software called  Gurobi \cite{optimization2014inc} to minimize the $L_1$ norm via linear programming for the sake of efficiency.  The stopping criterion is when the relative error of $\h x^{(k)}$ to $\h x^{(k-1)}$ is smaller than  $10^{-8}$ or iterative number exceeds $10n$.

\subsection{Algorithmic behaviors}

We empirically  demonstrate the convergence of the proposed ADMM algorithms in \Cref{fig:distance_auxiliary}.  
Specifically we examine the $L_1/L_2$ minimization problem \eqref{equ:l1dl2_model}, where the sensing matrix is  an oversampled DCT matrix with $F=10$ and ground-truth sparse vector has 12 non-zero elements. We also study  the MRI reconstruction from 7 radical lines as a particular sparse gradient problem
that involves the $L_1/L_2$-grad minimization of \eqref{eq:grad_con} by \Cref{alg:l1dl2gradADMM}.

There are two auxiliary variables $\h y$ and $\h z$ in  $L_1/L_2$ such that $\h x = \h y=\h z$,
while two auxiliary variables $\h d, \h h$ are in $L_1/L_2$-grad for $\nabla u=\h d = \h h$. We show in the top row of \Cref{fig:distance_auxiliary}   the values of $\left\|\h x^{(k)}- \h y^{(k)}\right\|_2$ and $\left\|\h x^{(k)}- \h z^{(k)}\right\|_2$ as well as $\left\|\nabla u^{(k)}-\h d^{(k)}\right\|_2$ and $\left\|\nabla u^{(k)}-\h h^{(k)}\right\|_2$, all are plotted with respect to the iteration counter $k$. The bottom row of \Cref{fig:distance_auxiliary}  is for objective functions, i.e., $\left\|\h x^{(k)}\right\|_1/\left\|\h x^{(k)}\right\|_2$ and $\left\|\nabla u^{(k)}\right\|_1/\left\|\nabla u^{(k)}\right\|_2$ for  $L_1/L_2$ and  $L_1/L_2$-grad, respectively. All the plots in \Cref{fig:distance_auxiliary} decrease rapidly with respect to iteration counters, which serves as heuristic evidence of algorithmic convergence. 
On the other hand, the objective functions in \Cref{fig:distance_auxiliary}  look oscillatory. This phenomenon implies  difficulties in theoretically  proving the convergence, as one key step in the convergence  proof requires to show that  objective function decreases monotonically \cite{bolte2014proximal,wang2019global}. 

\begin{figure}
	\begin{center}
		\subfloat[Residual errors in $L1/L2$]{\label{fig:re_l1dl2}\includegraphics[height=0.42\textwidth]{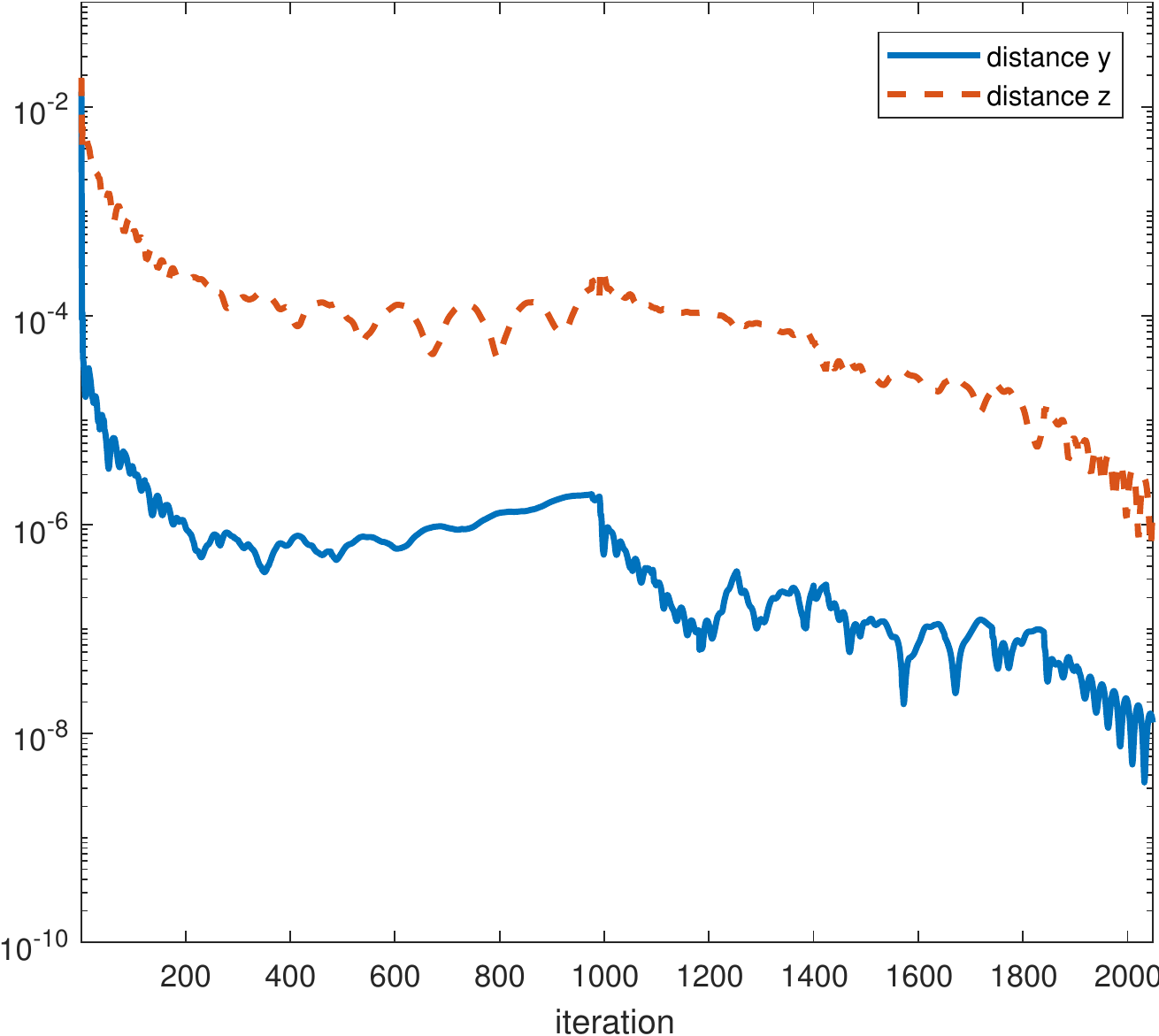}} 
\subfloat[Residual errors in $L1/L2$-grad]{\label{fig:re_l1dl2_g} \vspace{4mm} \includegraphics[height=0.42\textwidth]{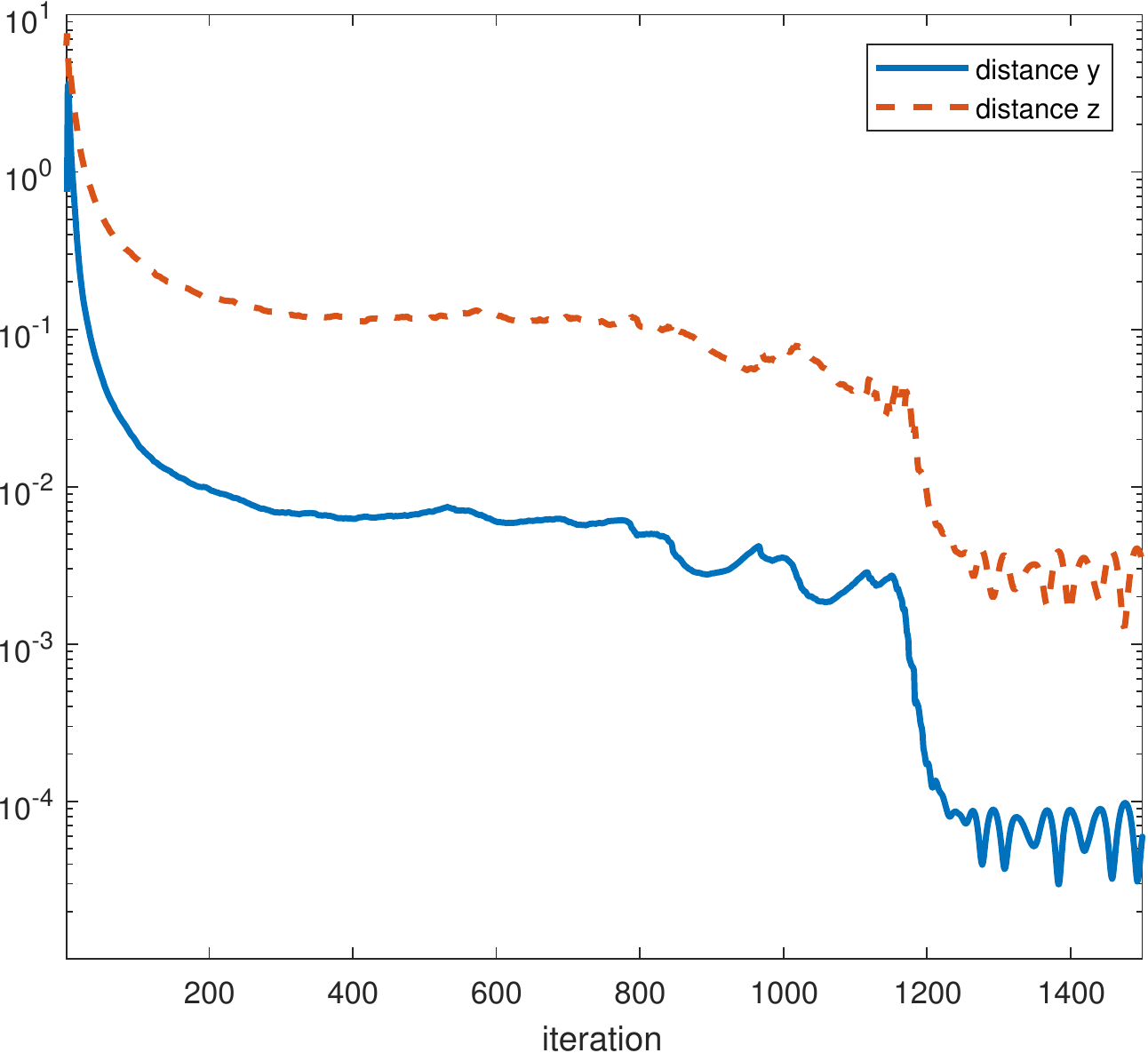}} \\
\subfloat[Objective functions of  $L1/L2$]{\label{fig:ob_l1dl2}\includegraphics[height=0.42\textwidth]{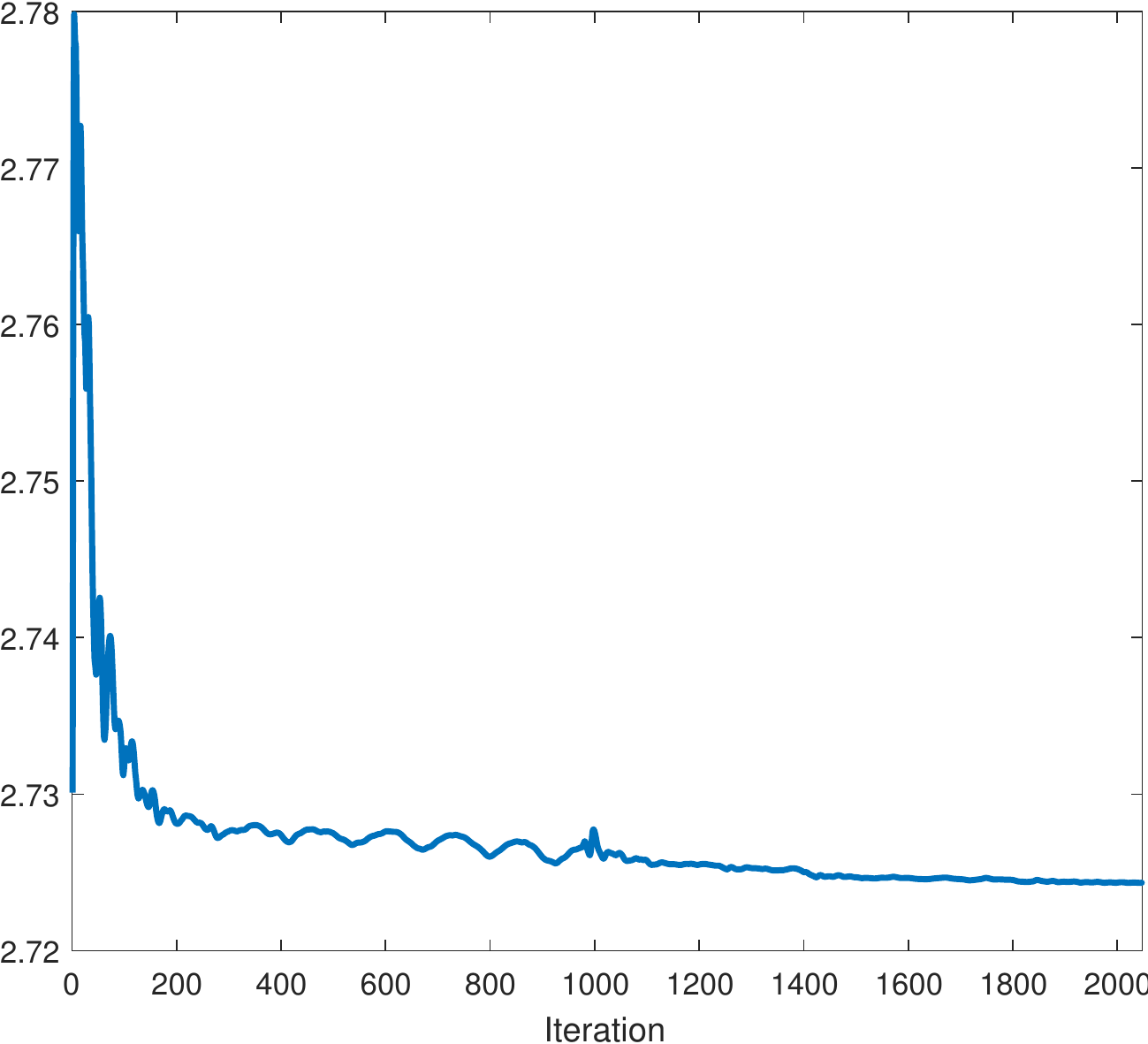}} \vspace{4mm}
\subfloat[Objective functions of $L1/L2$-grad]{\label{fig:ob_l1dl2_g}\includegraphics[height=0.42\textwidth]{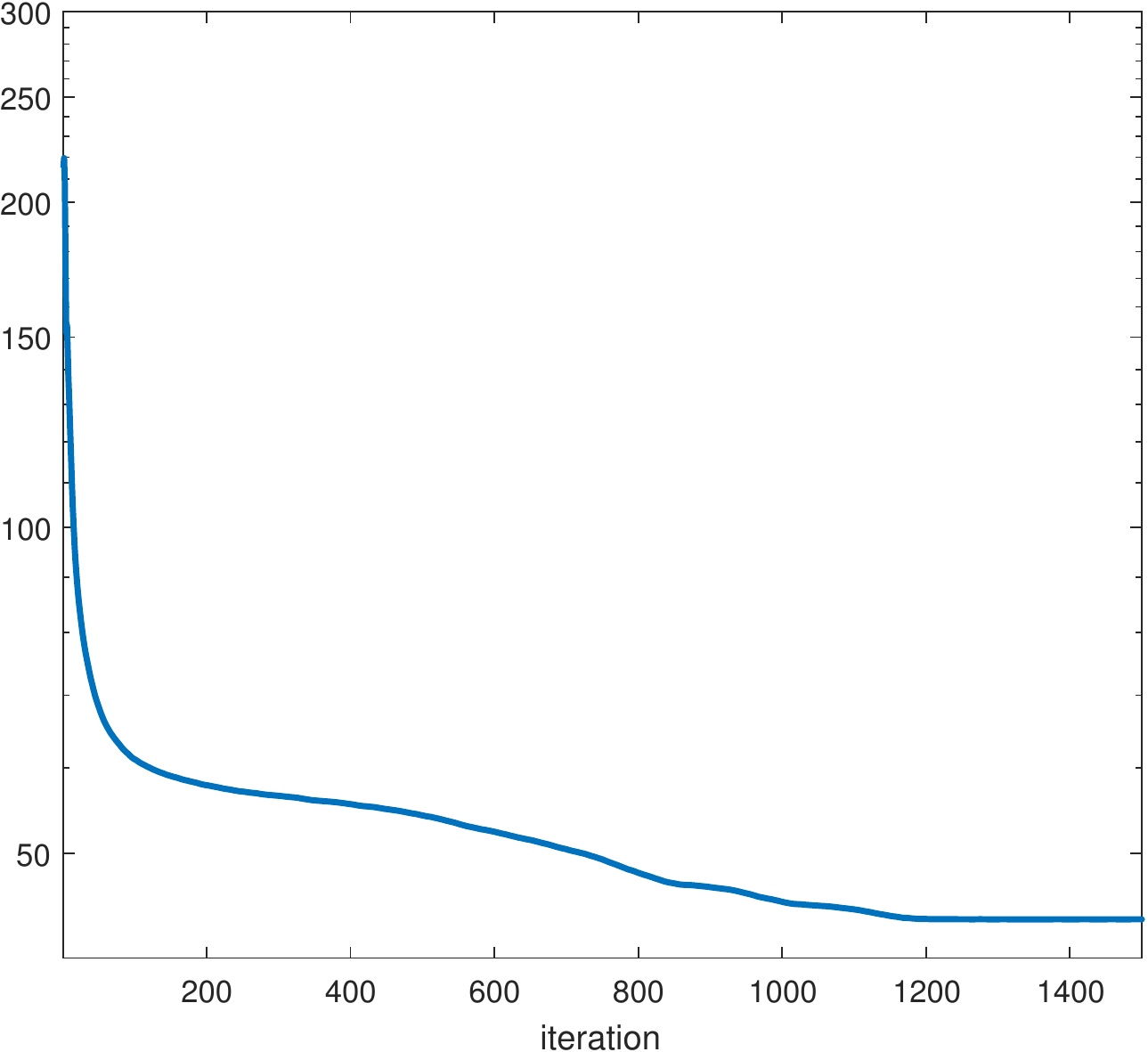}} 
		\caption{Plots  of residual errors   and objective functions for  empirically demonstrating the convergence of the proposed algorithms - $L_1/L_2$ in signal processing and $L_1/L_2$-grad with a box constraint for MRI reconstruction. }
	\end{center}
	\label{fig:distance_auxiliary}
\end{figure}

\subsection{Comparison on various models}

We now compare the proposed $L_1/L_2$ approach with  other sparse recovery models: $L_1$, $L_p$ \cite{chartrand07},  $L_1$-$L_2$ \cite{yinLHX14,louYHX14}, and TL1 \cite{zhangX18}. We choose $p=0.5$ for $L_p$  and $a= 1$ for TL1.  The initial guess for all  the algorithms is the solution of the $
L_1$ model.  Both $L_1$-$L_2$ and TL1 are solved via the DCA, with the same stopping criterion as $L_1/L_2$, i.e., $\frac{\left\|\h x^{(k)}-  \h x^{(k-1)}\right\|_2}{\left\|\h x^{(k)}\right\|_2} \leq 10^{-8}$. As for $L_p$, we 
follow the default setting in \cite{chartrand07}.

We evaluate the performance of  sparse recovery in terms of \textit{success rate}, defined as the number of successful trials over the total number of trials. A  success is declared if the relative error of the reconstructed solution $\h x^\ast$ to the ground truth $\h x$ is less than $10^{-3}$,  i.e.,  
$
\frac{\|\h x^\ast-  \h x\|_2}{\|\h x\|_2} \leq 10^{-3}. 
$
We further categorize the failure of not recovering the ground-truth as \textit{model failure} and \textit{algorithm failure}. In particular, we compare the objective function $\Fcal(\cdot)$ at the ground-truth $\h{x}$ and at the reconstructed solution $\h{x}^\ast$. If $\Fcal (\h{x}) > \Fcal (\h{x}^\ast)$, it means that $\h{x}$ is not a global minimizer of the model, in which case we call \textit{model failure}. On the other hand, $\Fcal (\h{x})<\Fcal (\h x^\ast)$ implies that the algorithm does not reach a global minimizer, which is referred to as \textit{algorithm failure}. Although this type of analysis is not deterministic, it sheds some lights on which direction to improve: model or algorithm. For example, it was reported in \cite{maLH17} that $L_1$ has the highest model-failure rates, which justifies the need for nonconvex models.

\begin{figure}[htbp]
	\centering 
	\subfloat[Success rates ($F=5$)]{\label{fig:SR_F5}\includegraphics[width=0.49\textwidth]{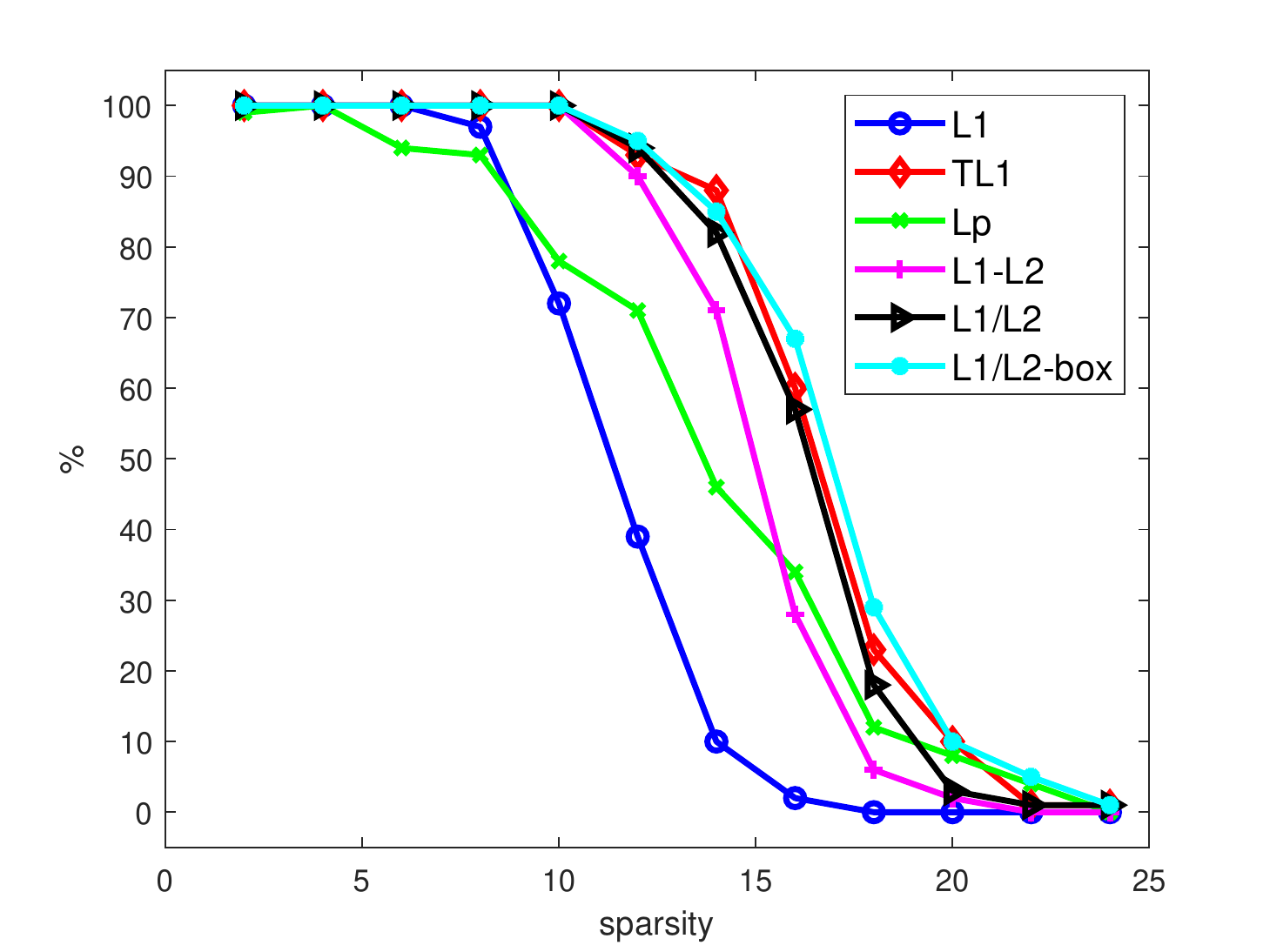}} 
	\subfloat[Success rates ($F=10$)]{\label{fig:SR_F10}\includegraphics[width=0.49\textwidth]{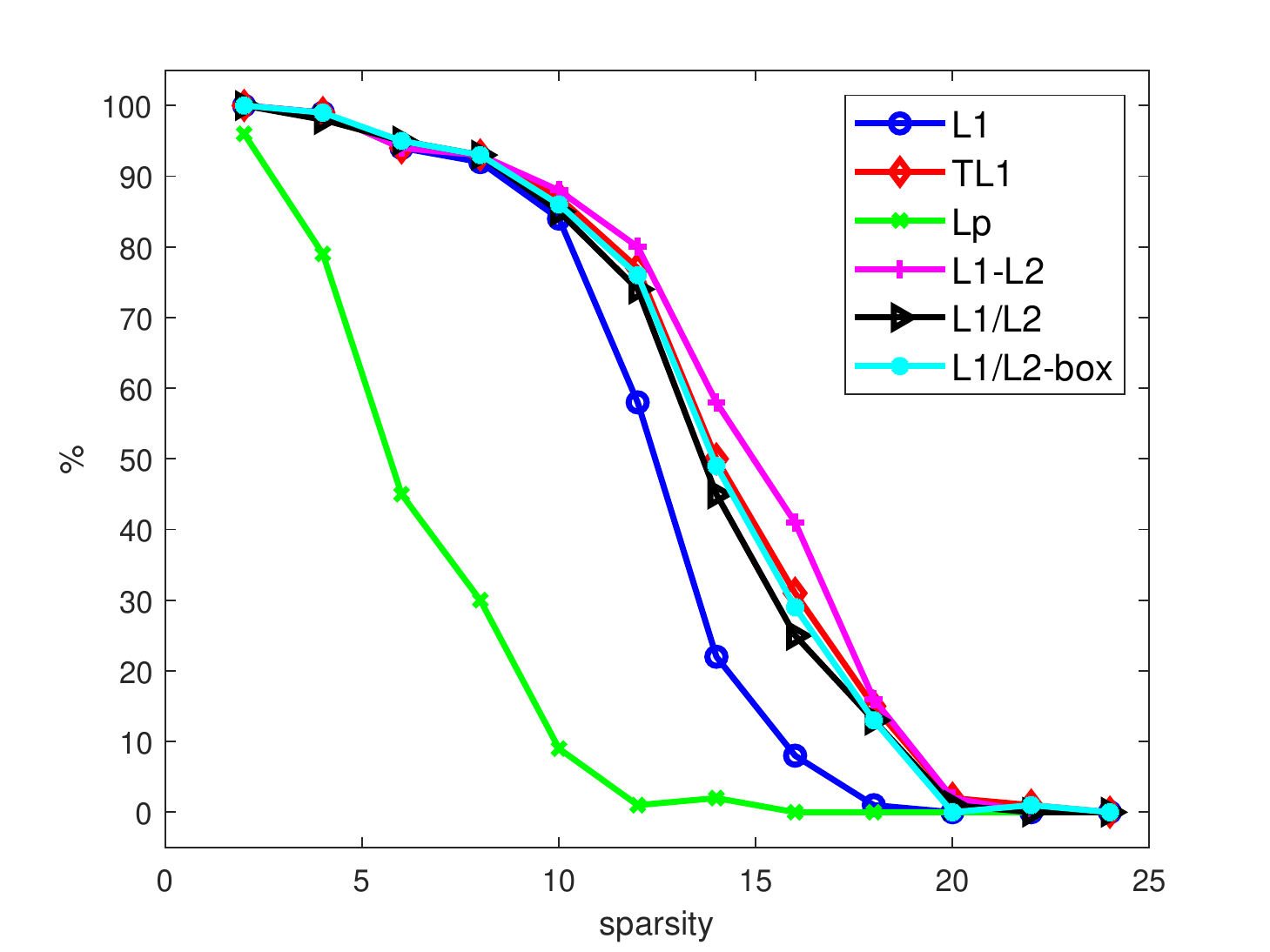}}\\
	\subfloat[Model failures ($F=5$)]{\label{fig:MF_F5}\includegraphics[width=0.49\textwidth]{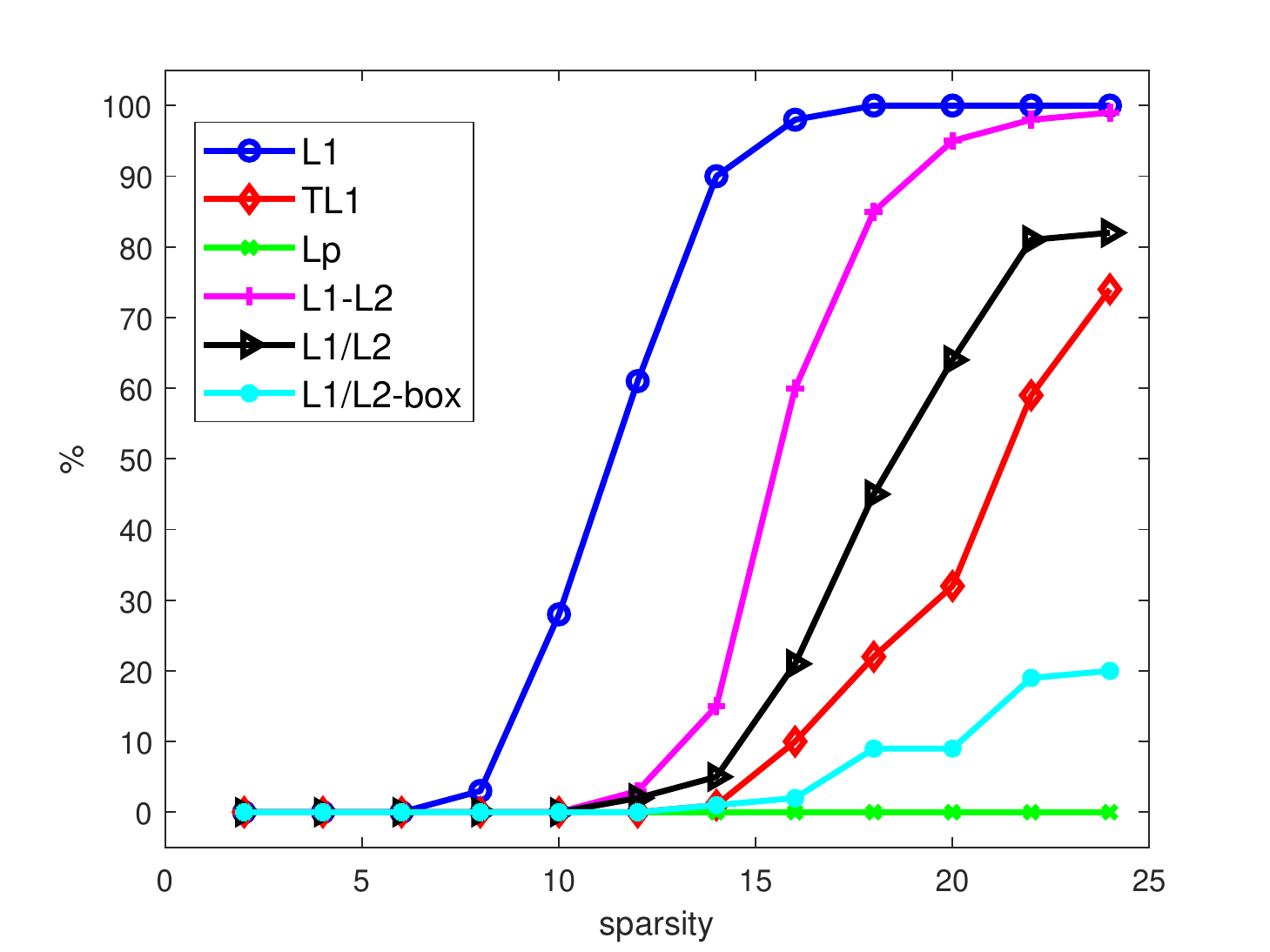}} 
	\subfloat[Model failures ($F=10$)]{\label{fig:MF_F10}\includegraphics[width=0.49\textwidth]{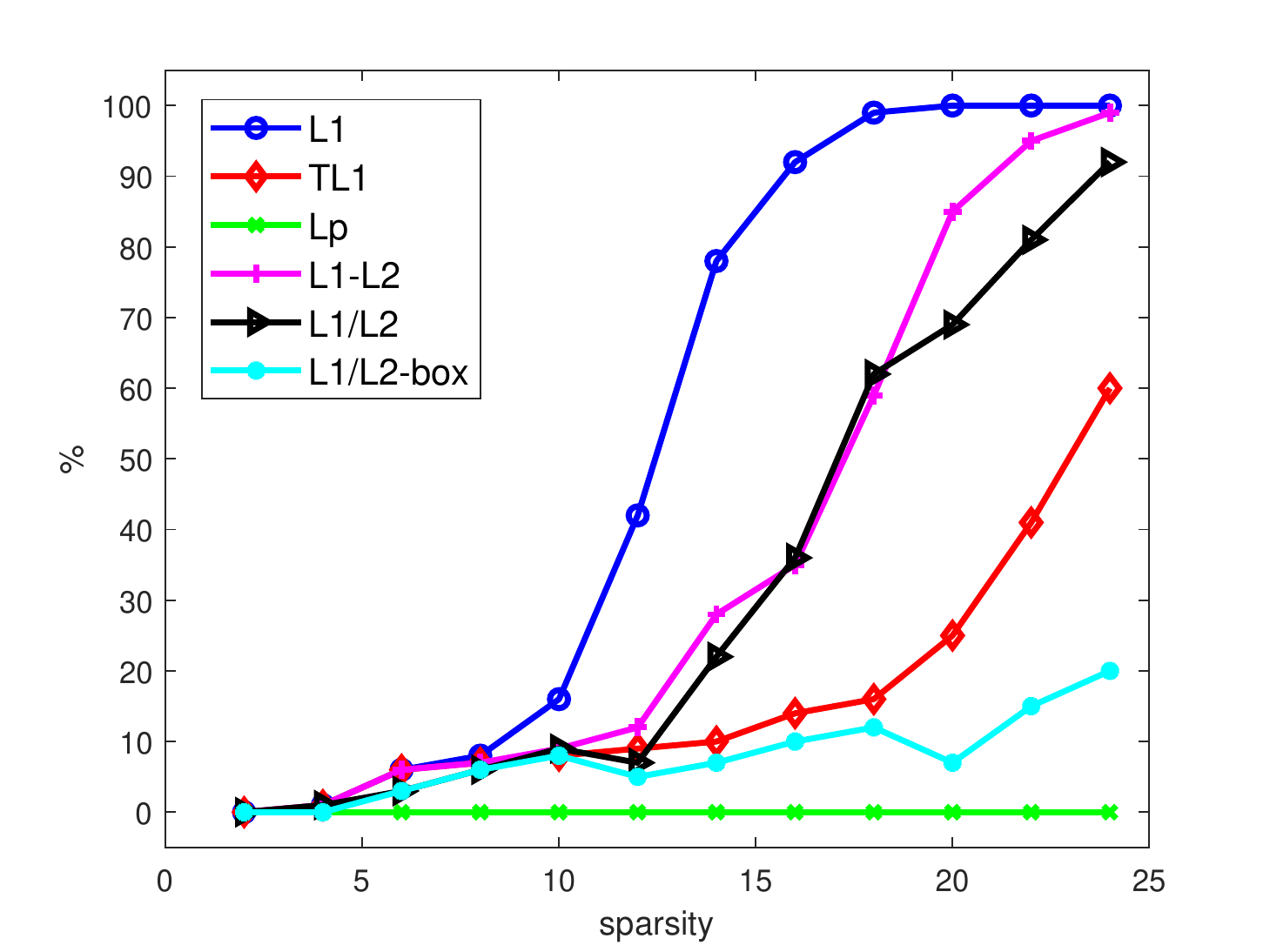}}\\
	\subfloat[Algorithm failures ($F=5$)]{\label{fig:AF_F5}\includegraphics[width=0.49\textwidth]{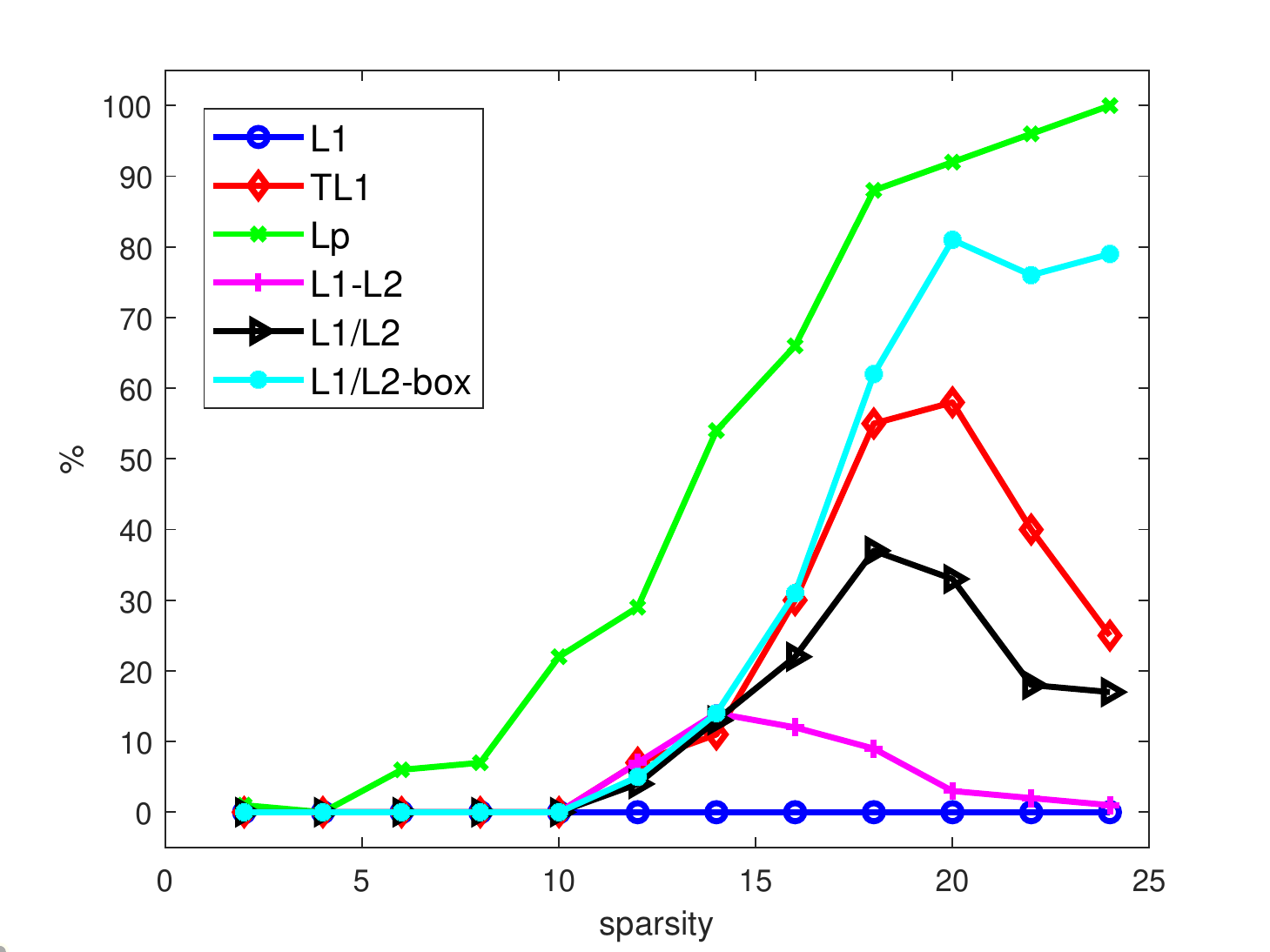}} 
	\subfloat[Algorithm failures ($F=10$)]{\label{fig:AF_F10}\includegraphics[width=0.49\textwidth]{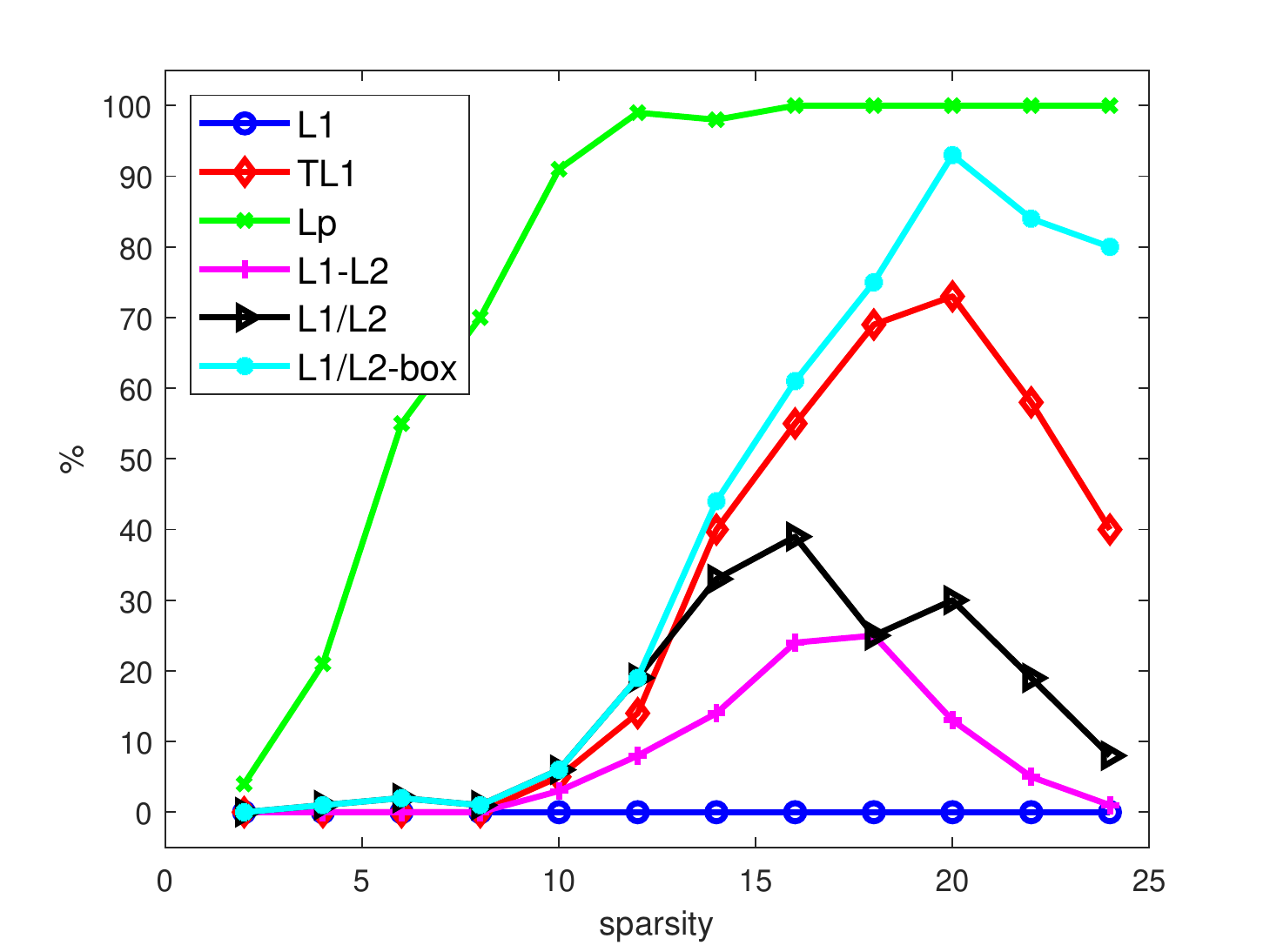}}\\
	\caption{Success rates, model failures, algorithm failures for 6 algorithms in the case of oversampled DCT matrices. }
	\label{fig:SR_MF_AF_uniform}
\end{figure}

In \Cref{fig:SR_MF_AF_uniform}, we examine two coherence levels: $F= 5$ corresponds to relatively low coherence and $F=20$ for higher coherence. The success rates of various models reveal that $L_1/L_2$-box performs the best at $F=5$ and is comparable to $L_1$-$L_2$ for the highly coherent case of $F=20$. We look at Gaussian matrix with $r=0.1$ and $r=0.8$ in \Cref{fig:SR_MF_AF_dynamic}, both of which exhibit very similar performance of various models. In particular, the $L_p$ model gives the best results for the Gaussian case, which is consistent in the literature \cite{yinEX14,louYHX14}. The proposed model of $L_1/L_2$-box is the second best for such incoherent matrices.

By comparing  $L_1/L_2$  with and without box among the plots for success rates and model failures, we can draw the conclusion that the box constraint can mitigate the inherent drawback of the $L_1/L_2$ model, thus improving the recovery rates. 
In addition, $L_1/L_2$ is the second lowest in terms of model failure rates and simply adding a box constraint also increases the occurrence of algorithm failure compared to the none box version. These two observations suggest a need to further improve upon algorithms of minimizing $L_1/L_2$.

Finally, we provide the computation time for all the competing algorithms in \Cref{tab:compare_time} with the shortest  time in each case highlighted in bold. The time for $L_1$ method is not included, as all the other methods use the $L_1$ solution as initial guess. It is shown that TL1 is the fastest for relatively lower sparsity levels and the proposed $L_1/L_2$-box is the most efficient at higher sparsity levels. The computational times for all these methods seem consistent with DCT and Gaussian matrices. 

\begin{figure}[htbp]
	\centering 
	\subfloat[Success rates ($r=0.1$)]{\label{fig:SR_r1}\includegraphics[width=0.49\textwidth]{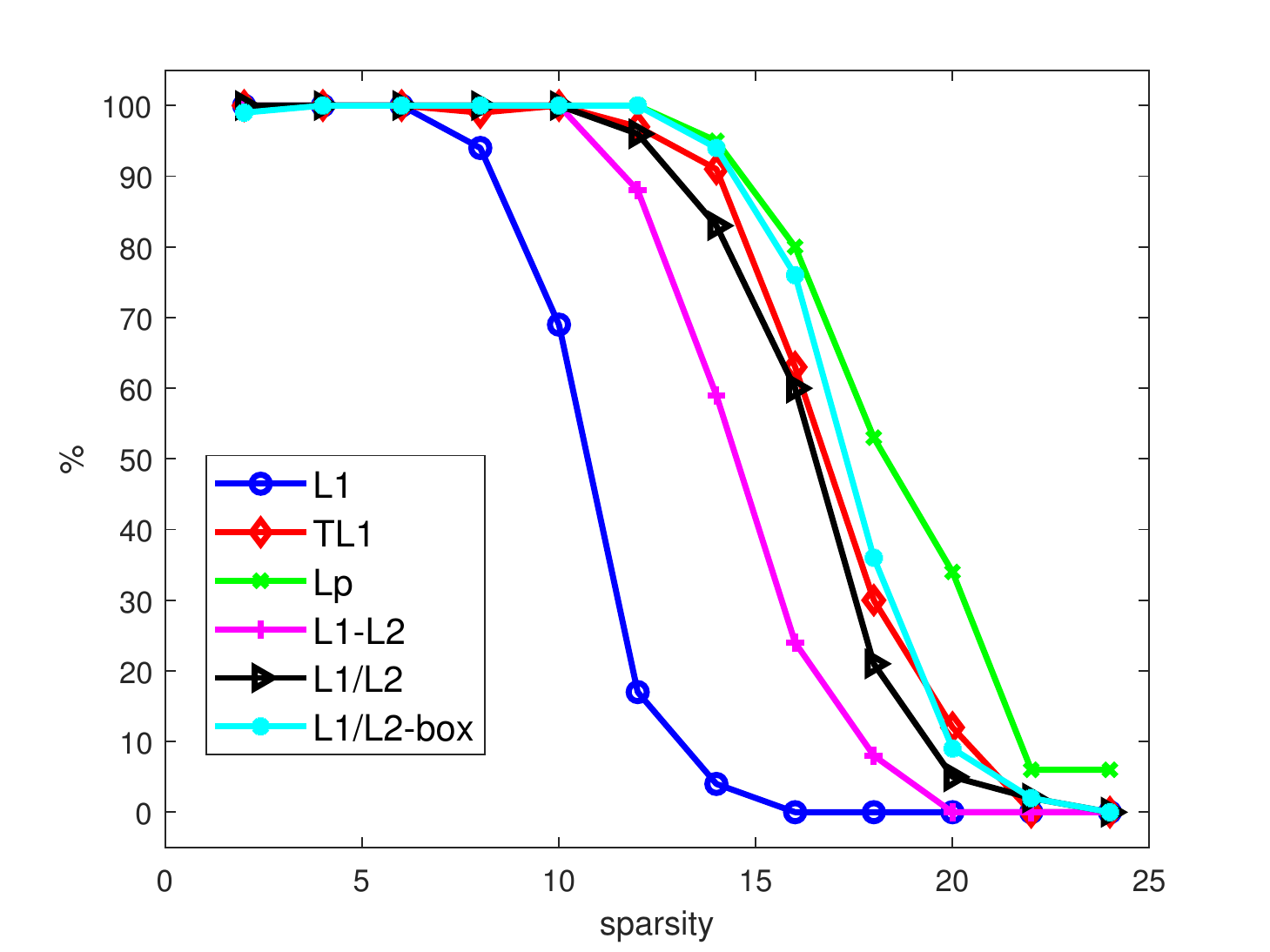}} 
	\subfloat[Success rates ($r=0.8$)]{\label{fig:SR_r8}\includegraphics[width=0.49\textwidth]{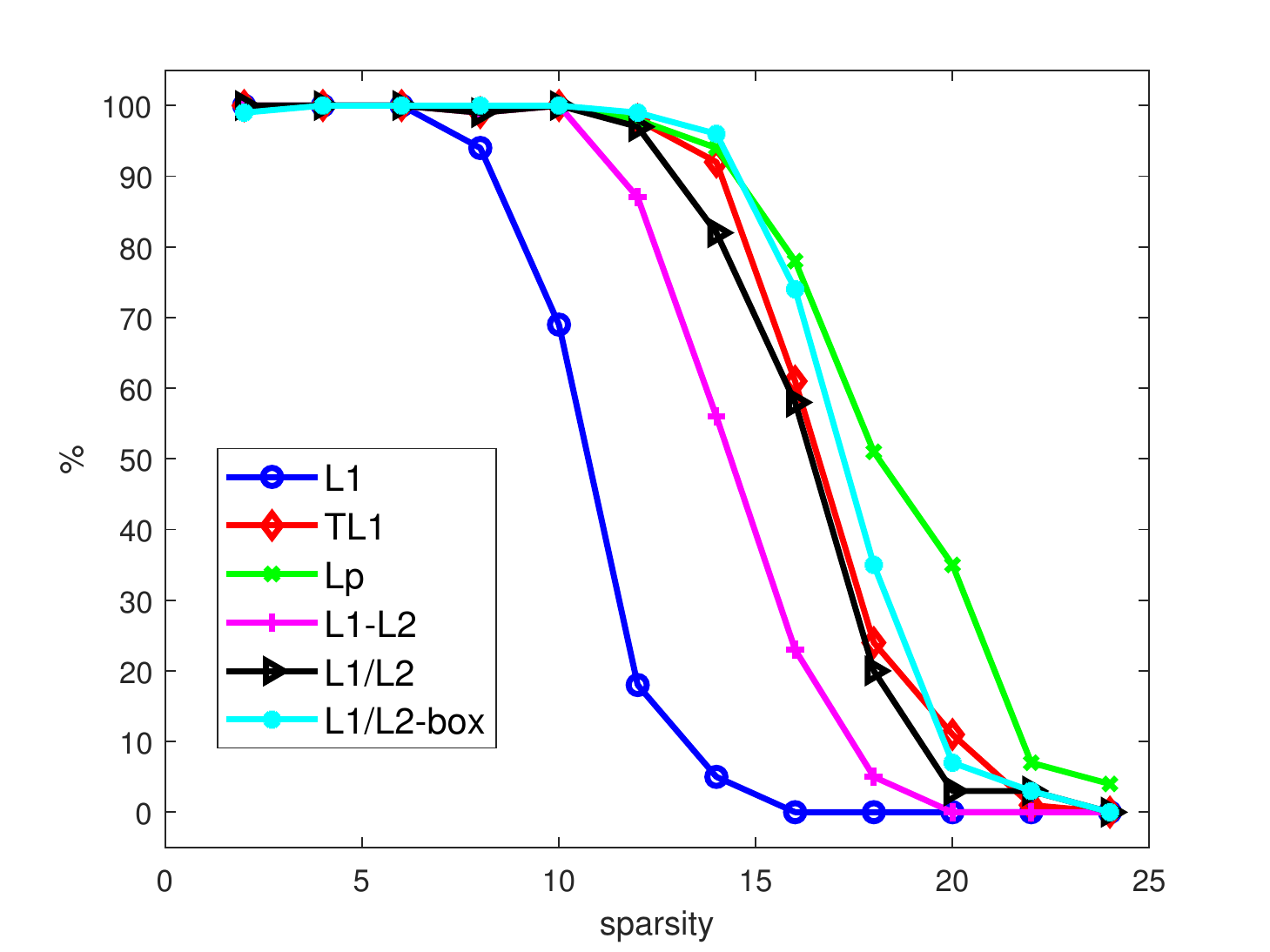}}\\
	\subfloat[Model failures ($r=0.1$)]{\label{fig:MF_r1}\includegraphics[width=0.49\textwidth]{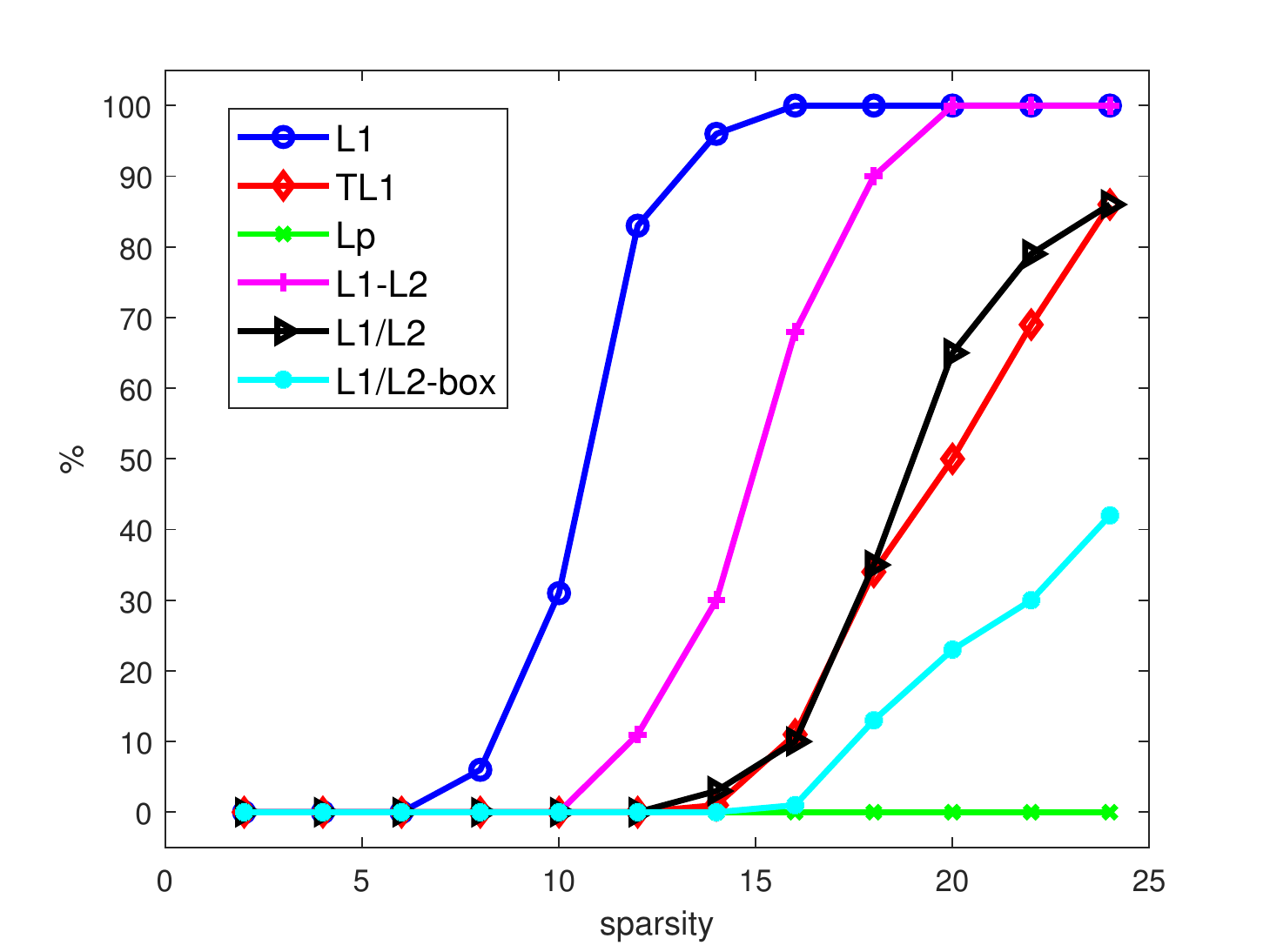}} 
	\subfloat[Model failures ($r=0.8$)]{\label{fig:MF_r8}\includegraphics[width=0.49\textwidth]{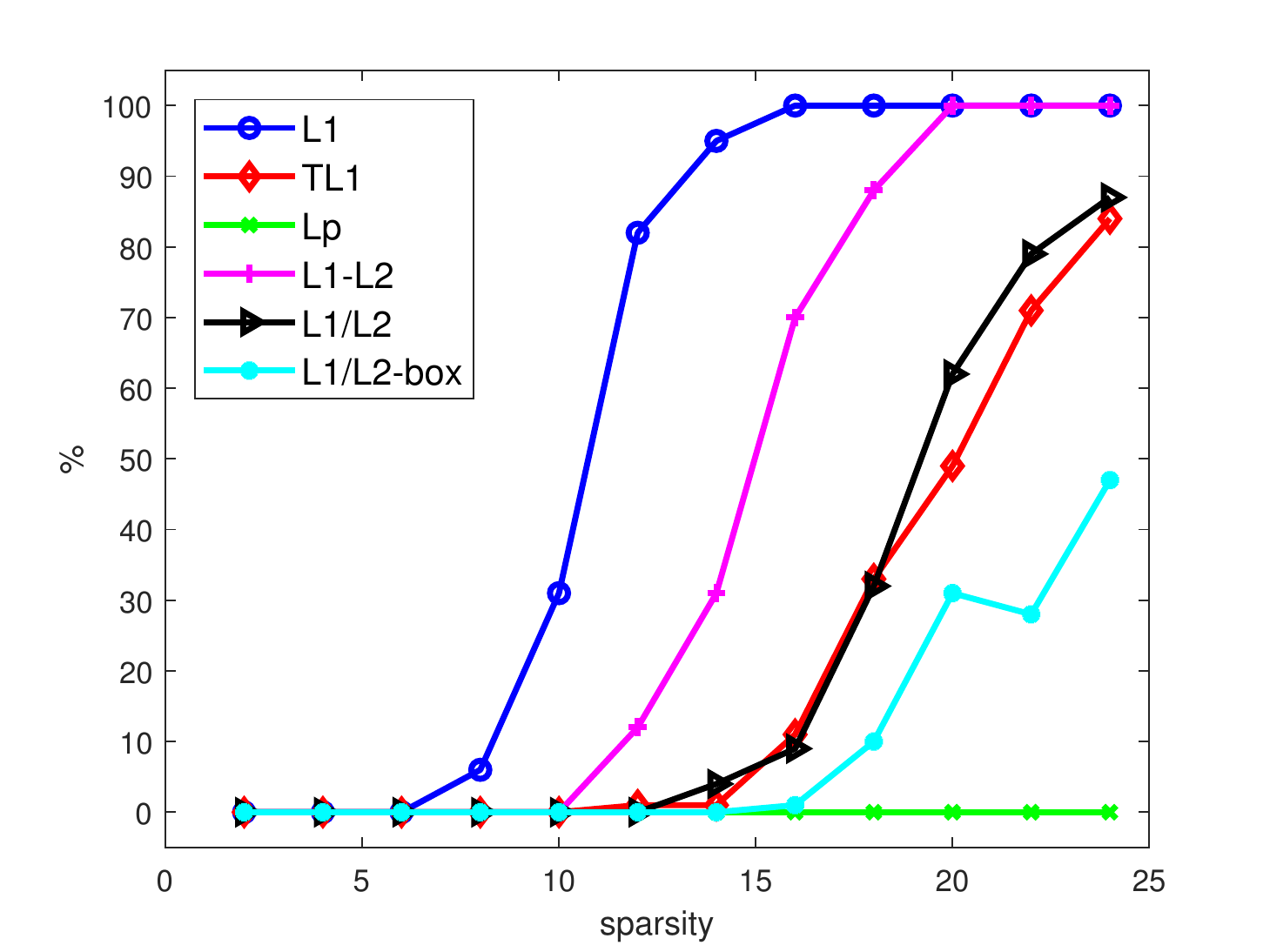}}\\
	\subfloat[Algorithm failures ($r=0.1$)]{\label{fig:AF_r1}\includegraphics[width=0.49\textwidth]{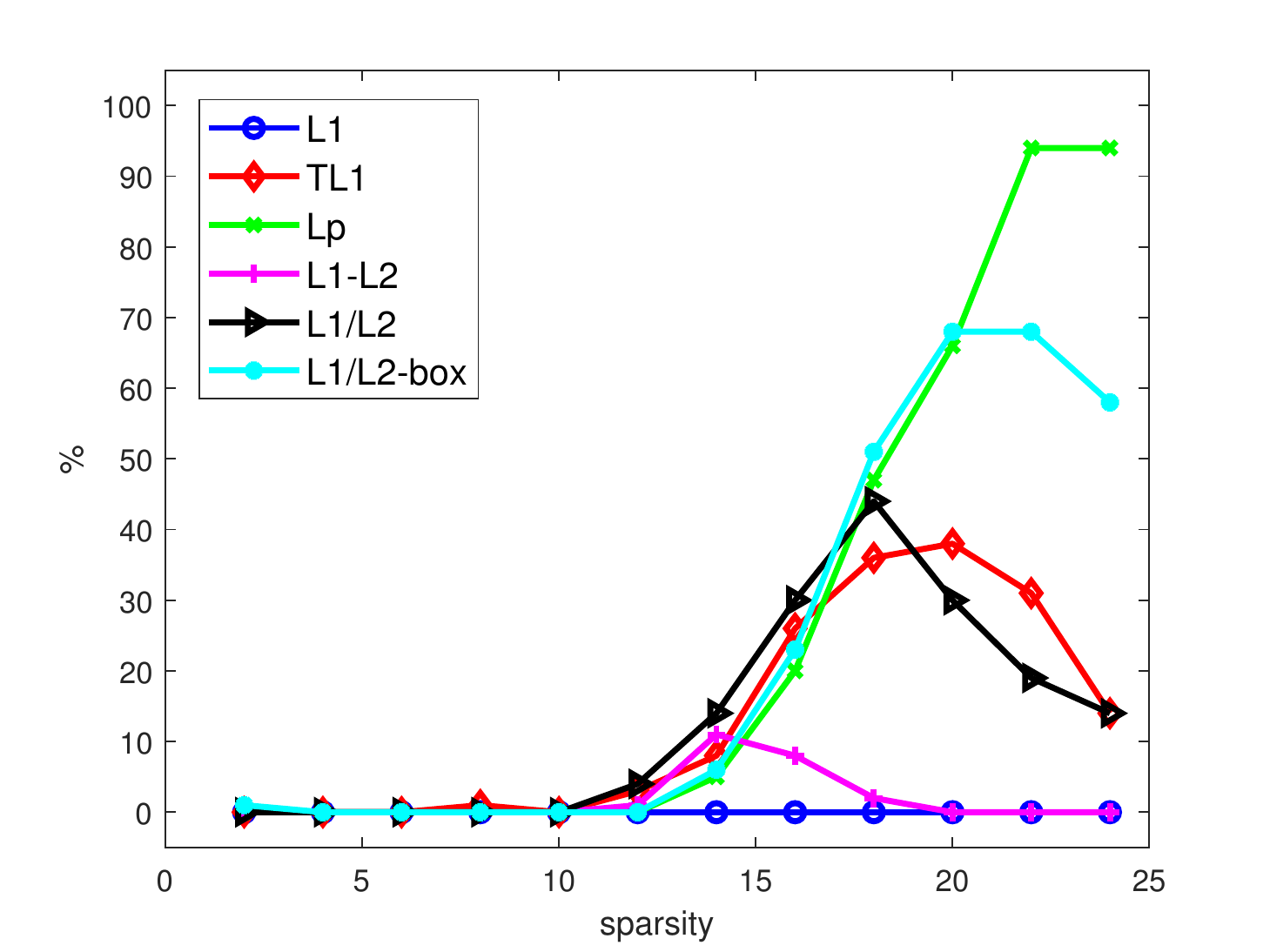}} 
	\subfloat[Algorithm failures ($r=0.8$)]{\label{fig:AF_r8}\includegraphics[width=0.49\textwidth]{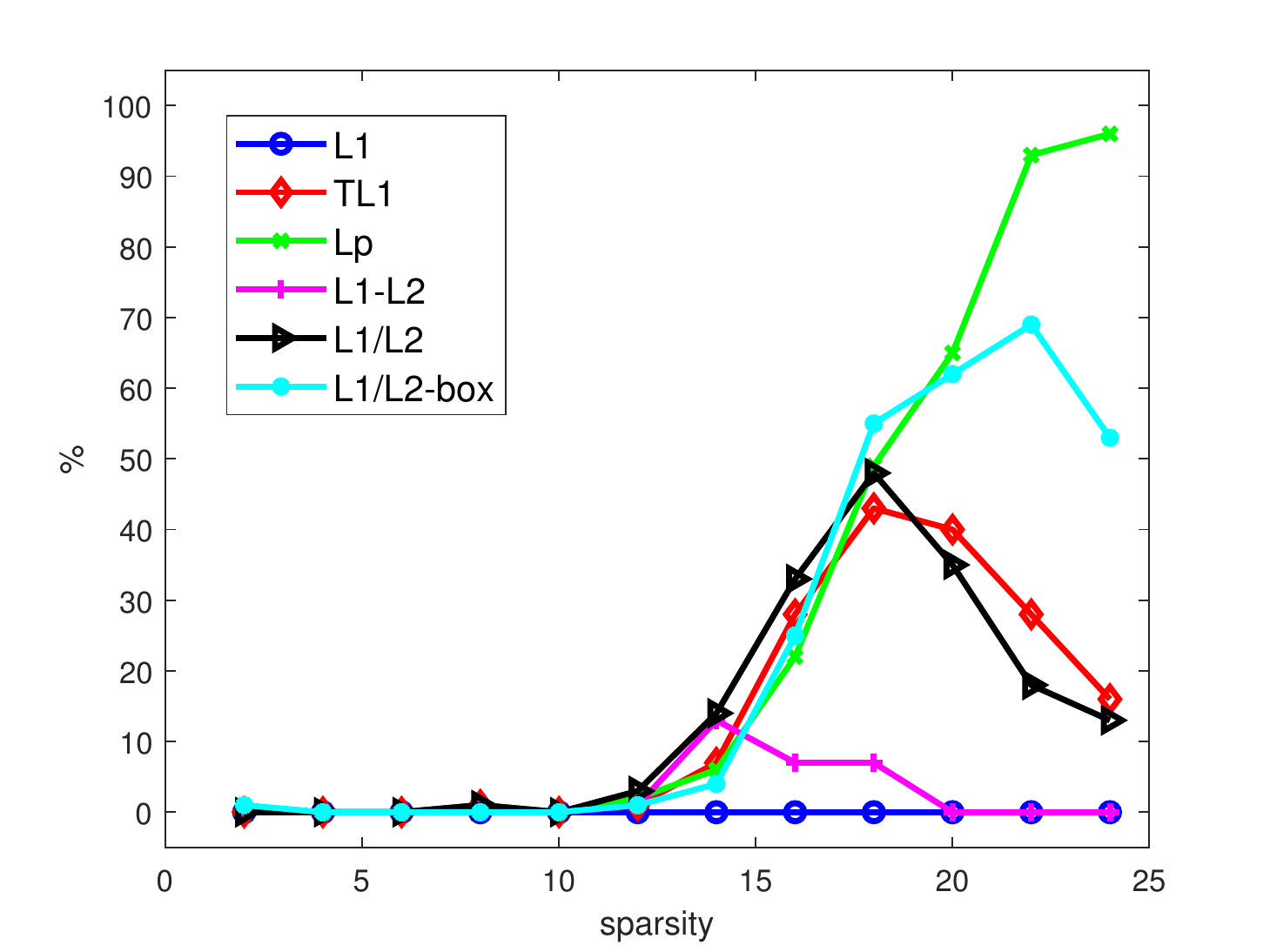}}\\
	\caption{Success rates, model failures, algorithm failures for 6 algorithms in the Gaussian matrix case. }
	\label{fig:SR_MF_AF_dynamic}
\end{figure}


\begin{table}[tbhp]
	{\footnotesize
		\captionsetup{position=top} 
		\caption{Computation time (sec.) in 5 algorithms.}\label{tab:compare_time} \begin{center}
			\subfloat[DCT matrix]{\label{Tab:time_uniform} 
				\begin{tabular}{|c|c|c|c|c|c|c|c|} \hline
					\multicolumn{8}{|c|}{$F=5$} \\ \hline
					sparsity & 2  &   6  &  10  &  14  &  18  &  22 & mean \\ \hline
					TL1 & \bf 0.049 & \bf 0.050 & \bf 0.066 & \bf  0.207 & 0.618 & 0.795 & 0.298 \\ \hline
					$L_p$ & 0.061 & 0.137 & 0.209 & 0.355 & 0.515 & 0.565 & 0.307\\ \hline
					$L_1$-$L_2$ & \bf 0.049 & \bf 0.050 &  0.071 & \ 0.260 & 0.550 & 0.625 & 0.267\\ \hline
					$L_1/L_2$ & 0.276 & 0.279 & 0.311 & 0.353 & 0.358 & 0.366 & 0.324 \\ \hline
					$L_1/L_2$-box & 0.102 & 0.183 & 0.247 & 0.313 & \bf 0.325 & \bf 0.332 & \bf 0.250 \\ \hline\hline
					\multicolumn{8}{|c|}{$F=10$} \\ \hline
					sparsity & 2  &   6  &  10  &  14  &  18  &  22 & mean \\ \hline
					TL1 & \bf 0.048 & \bf  0.069 & \bf  0.092 & 0.330 & 0.654 & 0.755 & 0.325 \\ \hline
					$L_p$ & 0.094 & 0.254 & 0.423 & 0.472 & 0.530 & 0.534 & 0.385\\ \hline
					$L_1$-$L_2$ &  0.049 &  0.070 & 0.093 & \bf 0.272 & 0.598 & 0.677 & 0.293\\ \hline
					$L_1/L_2$ & 0.263 & 0.272 & 0.295 &  0.340 & 0.355 & 0.356 & 0.314 \\ \hline
					$L_1/L_2$-box & 0.090 & 0.179 & 0.239 & 0.301 & \bf 0.324 & \bf 0.322 & \bf 0.243 \\ \hline					
				\end{tabular}
			}\\
			\subfloat[Gaussian matrix]{\label{Tab:time_dynamic}
				\begin{tabular}{|c|c|c|c|c|c|c|c|} \hline
					\multicolumn{8}{|c|}{$r = 0.1$} \\ \hline
					sparsity & 2  &   6  &  10  &  14  &  18  &  22 & mean \\ \hline
					TL1 & \bf 0.070 & \bf 0.069 & \bf 0.117 & 0.295 & 1.101 & 1.633 & 0.548 \\ \hline
					$L_p$ & 0.079 & 0.128 & 0.229 & \bf 0.261 & \bf 0.742 & 1.218 & \bf 0.443 \\ \hline
					$L_1$-$L_2$ & \bf 0.070 & \bf 0.069 & 0.122 & 0.399 & 0.877 & \bf 1.161 & 0.450\\ \hline
					$L_1/L_2$ & 0.864 & 0.866 & 1.175 & 1.130 & 1.210 & 1.458 & 1.117 \\ \hline
					$L_1/L_2$-box & 0.324 & 0.625 & 1.039 & 1.060 & 1.146 & 1.385 & 0.930 \\ \hline\hline
					\multicolumn{8}{|c|}{$r = 0.8$} \\ \hline
					sparsity & 2  &   6  &  10  &  14  &  18  &  22 & mean \\ \hline
					TL1 & \bf 0.050 & \bf 0.053 & \bf 0.071 & 0.239 & 0.613 & 0.750 & 0.296 \\ \hline
					$L_p$ & 0.061 & 0.094 & 0.140 & \bf 0.207 & 0.426 & 0.613 & 0.257\\ \hline
					$L_1$-$L_2$ & 0.051 & 0.054 & 0.077 & 0.306 & 0.497 & 0.576 & 0.260\\ \hline
					$L_1/L_2$ &0.277 & 0.277 & 0.324 & 0.358 & 0.364 & 0.363 & 0.327\\ \hline
					$L_1/L_2$-box & 0.102 & 0.192 & 0.265 & 0.321 & \bf 0.332 & \bf 0.327 & \bf 0.256\\ \hline	
				\end{tabular}
			}
		\end{center}
	}
\end{table}

\subsection{MRI reconstruction}

As a proof-of-concept example, we study an MRI reconstruction problem \cite{sparseMRI} to compare the performance of $L_1$, $L_1$-$L_2$, and $L_1/L_2$ on the gradient. The $L_1$ on the gradient is the celebrated TV model \cite{rudinOF92}, while $L_1$-$L_2$ on the gradient was recently proposed in \cite{louZOX15}.
We use a standard Shepp-Logan phantom as a testing image, as shown in \Cref{fig:MRI_orig}. The MRI measurements are obtained by several radical lines in the frequency domain (i.e., after taking the Fourier transform); an example of such sampling scheme using 6 lines is shown in \Cref{fig:MRI_mask}.  As this paper focuses on the constrained formulation, we do not consider noise, following the same setting as in the previous works \cite{yinLHX14,louZOX15}. Since all the competing methods ($L_1$, $L_1$-$0.5L_2$, and $L_1/L_2$) yield an exact recovery with 8 radical lines, with accuracy in the order of $10^{-8}$, we  present the  reconstructions results of  6 radical lines in 
\Cref{fig:MRI}, which illustrates that the ratio model ($L_1/L_2$) gives much better results than the difference model ($L_1$-$0.5L_2$). \Cref{fig:MRI} also includes quantitative measures of the performance by relative error (RE) between the reconstructed and ground-truth images, which shows significantly improvement of the proposed $L_1/L_2$-grad over a classic method in MRI reconstruction, called  filter-back projection (FBP), and two recent works of using $L_1$ \cite{GO} and $L_1$-$0.5L_2$ \cite{louZOX15} on the gradient. 
Note that the state-of-the-art methods in MRI reconstruction are \cite{cs:edgecs_guo2012,maLH17} that have reported exact recovery from 7 radical lines. 

\begin{figure}
	\begin{center}
		\subfloat[Original]{\label{fig:MRI_orig}\includegraphics[width=0.31\textwidth]{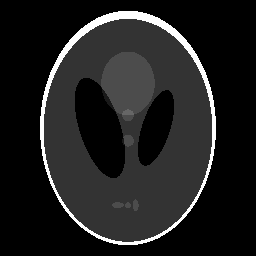}} \vspace{4mm}
		\subfloat[Sampling mask]{\label{fig:MRI_mask}\includegraphics[width=0.31\textwidth]{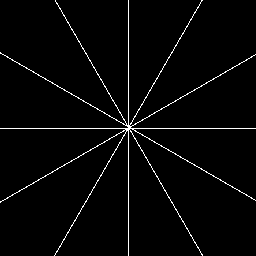}} \vspace{4mm}
		\subfloat[FBP (RE = 99.80\%)]{\label{fig:MRI_FBP}\includegraphics[width=0.31\textwidth]{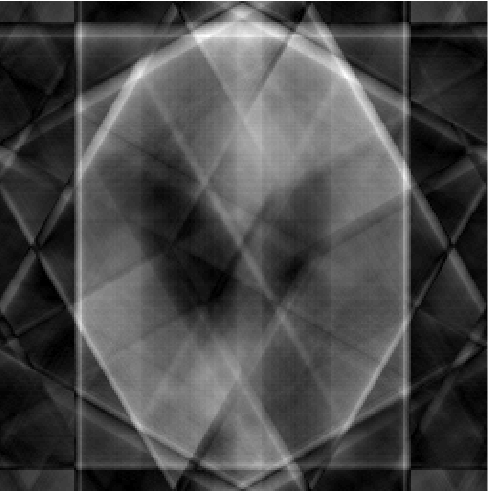}} \\
		\subfloat[$L_1$ (RE = 39.42\%)]{\label{fig:MRI_L1}\includegraphics[width=0.31\textwidth]{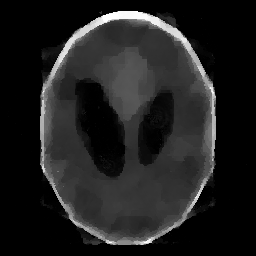}} \vspace{4mm}
		\subfloat[$L_1$-0.5$L_2$ (RE = 38.43\%)]{\label{fig:MRI_L1L2}\includegraphics[width=0.31\textwidth]{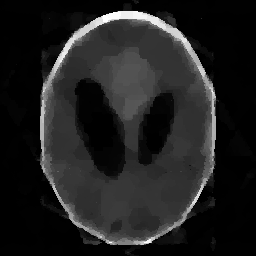}} \vspace{4mm}
		\subfloat[$L_1/L_2$ (RE = 0.04\%)]{\label{fig:MRI_L1dL2}\includegraphics[width=0.31\textwidth]{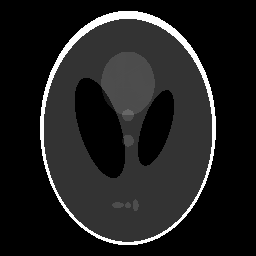}}  
	\end{center}
	\caption{MRI reconstruction results from 6 radical lines in the frequency domain (2.57$\%$ measurements). The relative errors (RE) are provided for each method. }\label{fig:MRI}
\end{figure}

\section{Empirical validations}\label{sect:ratio}

A review article \cite{candes2008introduction} indicated that two principles in CS are \textit{sparsity} and \textit{incoherence}, leading an impression that a sensing matrix with smaller coherence  is easier for sparse recovery.
However, we observe through numerical results \cite{louOX15} (also given in \Cref{fig:SR_ms}) that a more coherent matrix  gives higher recovery rates. This contradiction motivates us to collect empirical evidence regarding to  either prove or refuse whether coherence is relevant to sparse recovery.
Here we  examine one such evidence by minimizing the ratio of $L_1$ and $L_2$, which gives an upper bound for a sufficient condition of $L_1$ exact recovery, see \eqref{eq:NSPratio}.  To avoid the trivial solution of $\h x=\h 0$ to the problem of $\min\limits_{\h x}\left\{\frac{\|\h x\|_1}{\|\h x\|_2}: A\h x = \h 0 \right\}$, we incorporate a sum-to-one constraint. In other word, we define an expanded matrix $\tilde{A} = [A; \mbox{ones}(n,1)]$ (following Matlab's notation) and an expanded vector $\tilde{\h b}= [\h 0; 1]$. We then adapt the proposed method to solve for 
$\min\limits_{\h x}\left\{\frac{\|\h x\|_1}{\|\h x\|_2}: \tilde A\h x = \tilde{\h b} \right\}.$ In \Cref{fig:mean_ratio}, 
we  plot the mean value of ratios from  50  random realizations of matrices $A$ at each coherence level (controlled by $F$),  which shows that the ratio actually decreases\footnote{We also observe that the ratio stagnates for larger $F$, which is probably because of instability of the proposed method when matrix becomes more coherent.} with respect to $F$.  As the $L_0$ norm is bounded by the ratio \eqref{eq:NSPratio}, smaller ratio indicates it is more difficult to recover the signals.  Therefore,  \Cref{fig:mean_ratio} is consistent with the common belief in CS.

We postulate that an underlying reason of more coherent matrices giving better results is minimum separation (MS), as formally introduced in \cite{candes2014towards}. In \Cref{fig:SR_ms}, we enforce the minimum separation of two neighboring spikes to be 40, following the suggestion of $2F$ in \cite{DCT2012coherence} (we consider $F$ up to 20). In comparison, we also give the success rates of the $L_1$ recovery without any restrictions on MS in \Cref{fig:SR_nonms}. Note that we use the exactly same matrices in both cases (with and without MS). 
\Cref{fig:SR_nonms} does not have a clear pattern regarding how coherence affects the exact recovery, which supports our hypothesis that minimum separation plays an important role in sparse recovery. It will be our future work to analyze it throughly.

\begin{figure}
	\centering 
	\subfloat[The mean of ratios]{\label{fig:mean_ratio}	\includegraphics[width=0.45\textwidth]{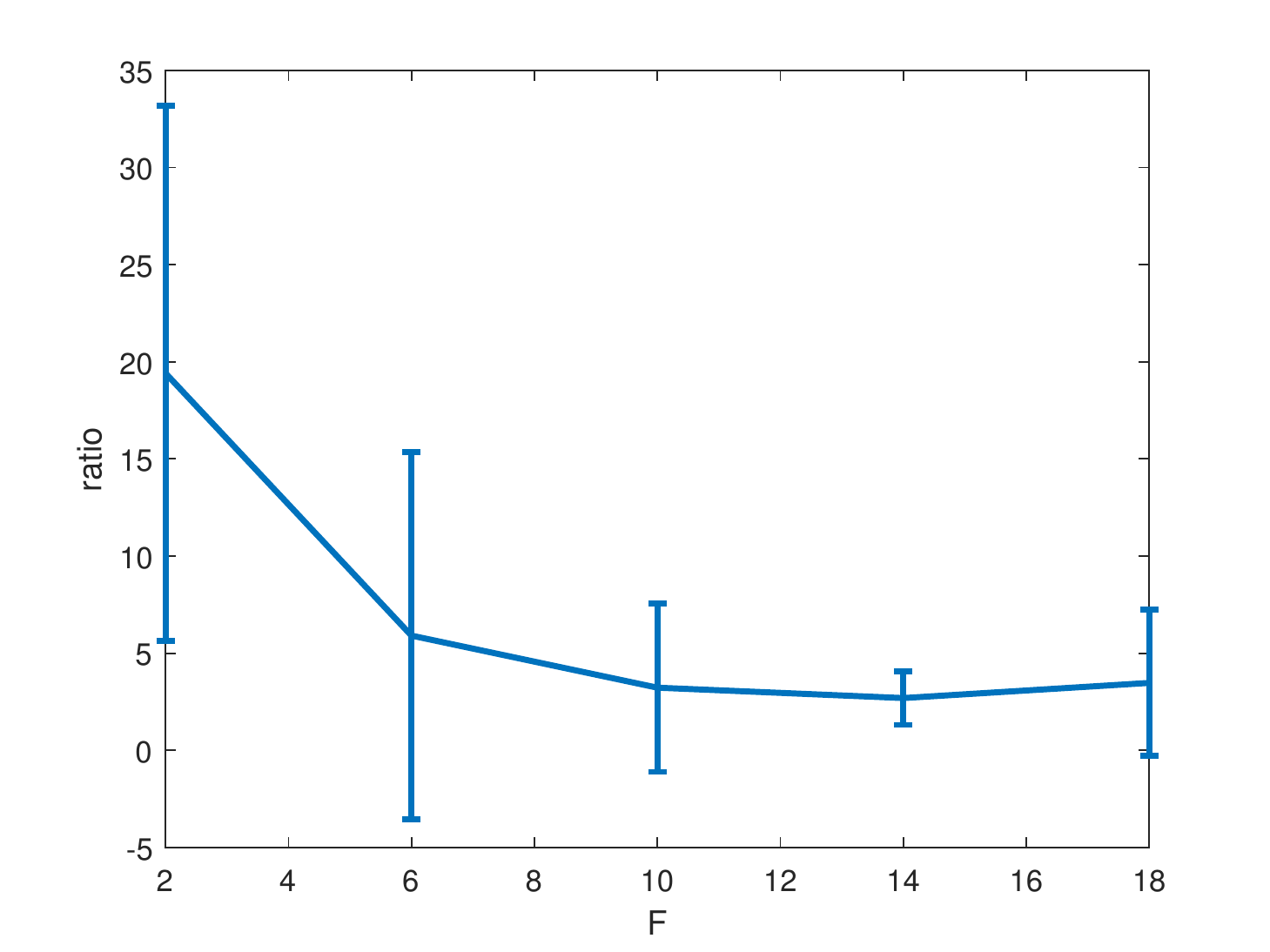}}\\
	\subfloat[Success rates with MS]{\label{fig:SR_ms}\includegraphics[width=0.45\textwidth]{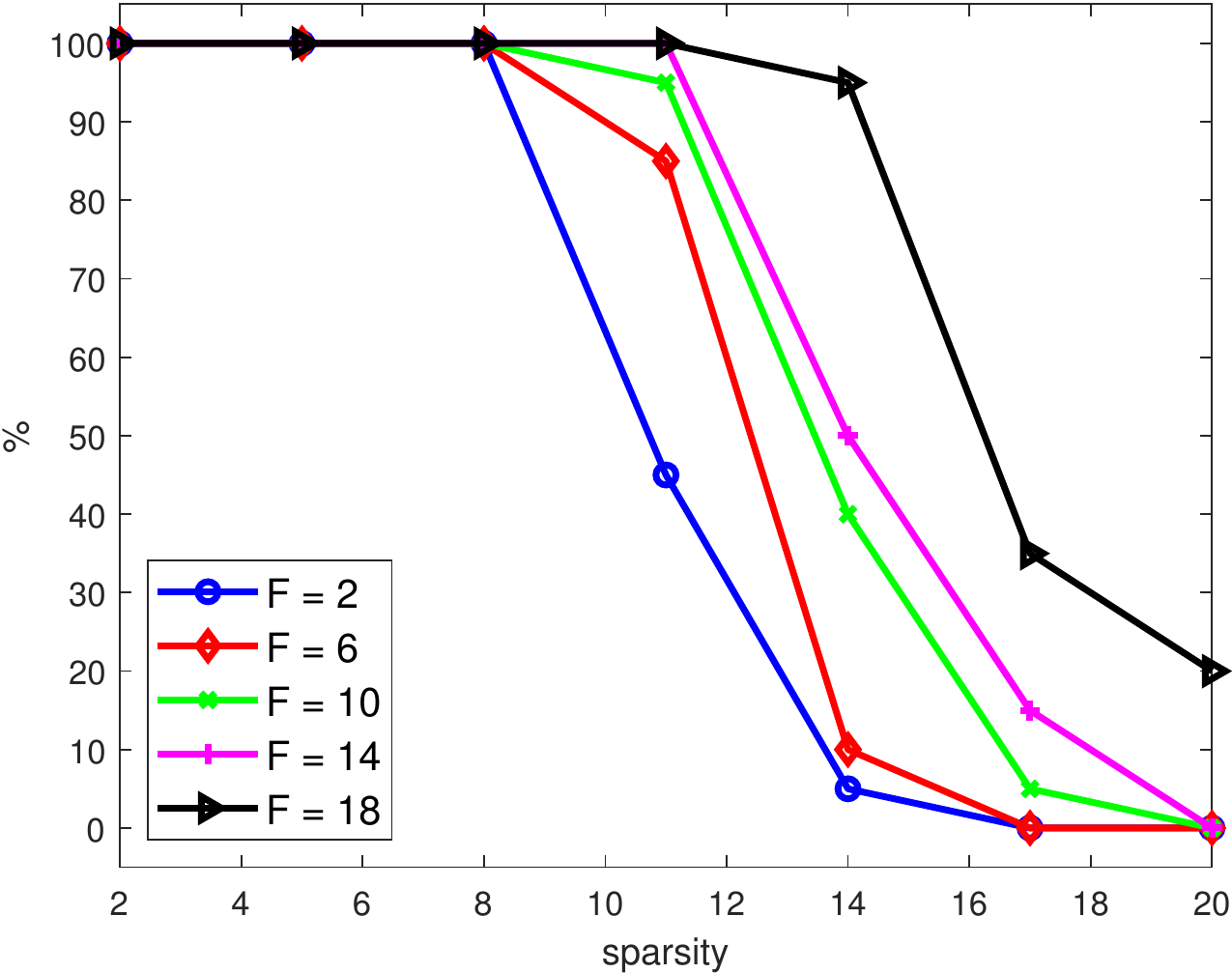}} 
	\subfloat[Success rates without MS]{\label{fig:SR_nonms}\includegraphics[width=0.45\textwidth]{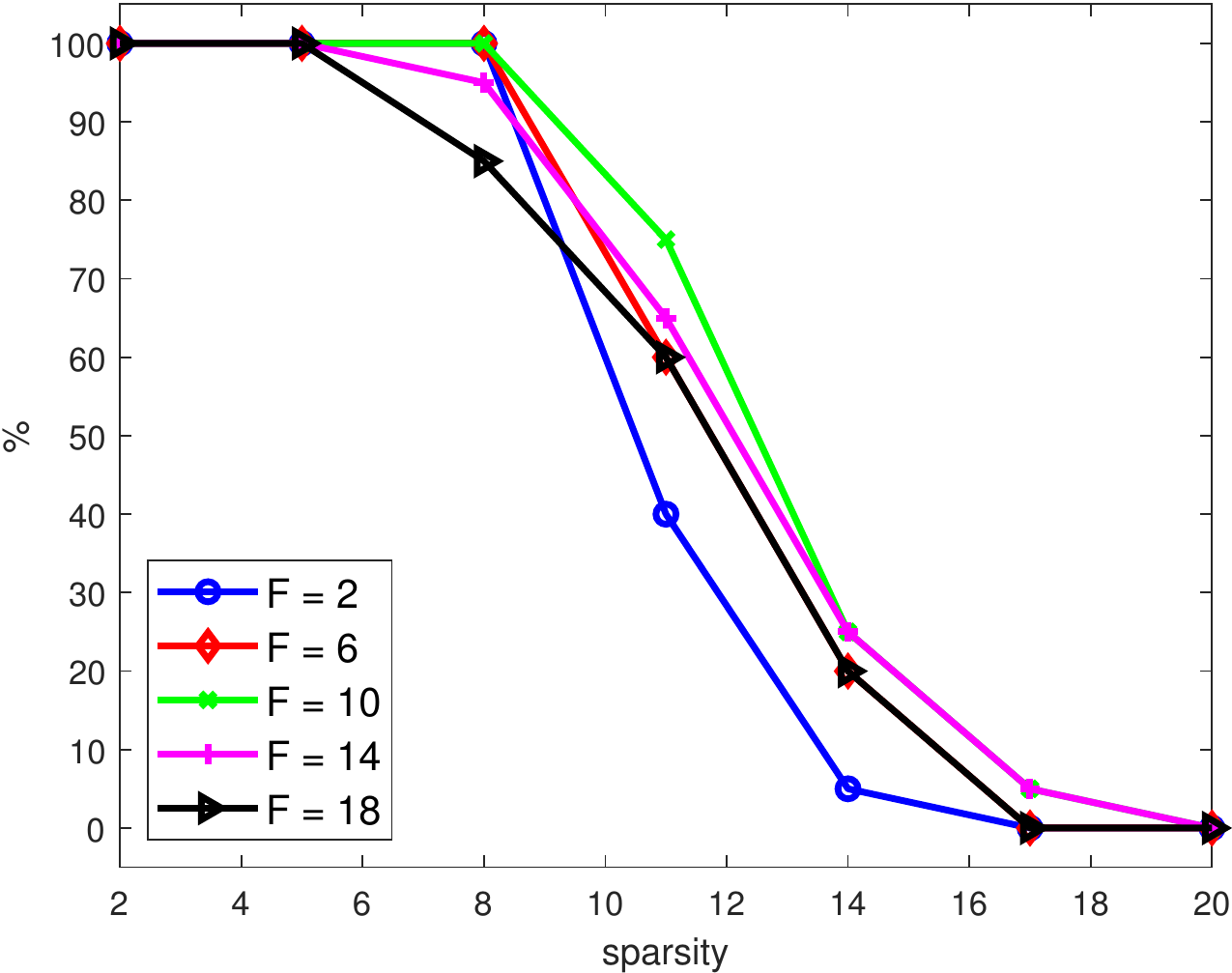}}
	\caption{The use of $\min\limits_{\h x}\left\{\frac{\|\h x\|_1}{\|\h x\|_2}: A\h x = \h 0 \right\}$ as an upper bound for  the $L_1$ recovery.  (a) plots the mean of ratios over 50 realizations with the standard deviation indicated as vertical bars. (b) and (c) are success rates of L1 recovery with and without minimum separation. }
	\label{fig:SR_ms_nonms}
\end{figure}

\section{Conclusions and future works}\label{sect:conclusion}

In this paper, we have studied a novel  $L_1/L_2$ minimization to promote sparsity. Two main benefits of $L_1/L_2$ are scale invariant and parameter free. 
Two numerical algorithms based on the ADMM are formulated for the assumptions of sparse signals and sparse gradients, together  with a variant of incorporating additional box constraint. The experimental results demonstrate the performance of the proposed approaches in comparison to the state-of-the-art methods in sparse recovery and MRI reconstruction. As a by-product, minimizing the ratio also gives an empirical upper bound towards $L_1$'s exact recovery, which motivates further investigations on exact recovery theories. Other  future works include algorithmic improvement and convergence analysis. In particular, it is shown in \Cref{tab:compare_time}, \Cref{fig:SR_MF_AF_uniform,fig:SR_MF_AF_dynamic,} that $L_1/L_2$ is not as fast as competing methods in CS and also  has certain algorithmic failures, which calls for a more robust and more efficient algorithm. In addition, we have provided  heuristic evidence of the ADMM's convergence in \Cref{fig:distance_auxiliary}  and it will be interesting to analyze it theoretically. 

\section*{Acknowledgements}
We would like to thank the editor and two reviewers for careful and thoughtful comments, which helped us greatly improve our paper. We also acknowledge the help of Dr.~Min Tao from Nanjing University, who suggested the reference on the ADMM convergence.

\section*{Appendix: proof of \Cref{thm:sNSP}}\label{sec:appendix}

In order to prove \Cref{thm:sNSP},
we study  the function 
	\begin{equation}
	g(t) =  \frac{\|\h x+t\h v\|_1^2}{\|\h x+t\h v\|_2^2}, 
	\end{equation}
	where $\h x \neq \h 0$\footnote{We assume that $\h b \neq 0 $ so $\h x=\h 0$ is not a solution to $A \h x = \h b $.} and 
	\begin{equation}\label{equ:v}
		\h v \in \ker(A)\backslash\h \{ \h 0\} \text{ with } \|\h v\|_2 = 1.
	\end{equation}	
Notice that the denominator of the function $ g$ is non-zero for all $t \in \mathbb{R} $. Otherwise, we have $\h x + t \h v = \h 0 $ and hence $A \h x + A (t \h v)  = A( \h 0)$. Since $ A \h x = \h b$ and $A \h v = \h 0,$ we get  $\h b = \h 0$ which is a contradiction. Therefore, the function $ g$ is continuous everywhere.	
Next, we introduce the following lemma to discuss the $L_1$ term in the numerator of $g$.  
	\begin{lemma}\label{lemma_x+tv}
	For any $\h  x\in \mathbb{R}^n$  and $\h  v \in \mathbb{R}^n $ satisfying \eqref{equ:v}, denote $S$ as the support of $\h x$ and $t_0 :=  \min\limits_{i \in S} |x_i|$.  We have 
	\begin{equation}\label{eq:x_tvL1}
	\|\h x+t\h v\|_1 = \|\h x\|_1 +t \sigma_t(\h v)>0, \quad \forall |t|<t_0, 
\end{equation}
where \begin{equation}\label{equ:simga}
\sigma_{t}(\h v) =  \sum_{i \in S} v_i \mbox{sign}(x_i) + \mbox{sign}(t)\|\h v_{\bar{S}}\|_1. 
\end{equation}
	\end{lemma}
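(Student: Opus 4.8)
The plan is to expand $\|\h x + t\h v\|_1$ coordinate by coordinate and split the sum according to whether an index lies in the support $S$ of $\h x$ or in its complement $\bar S$, writing
\[
\|\h x + t\h v\|_1 = \sum_{i \in S} |x_i + t v_i| + \sum_{i \in \bar S} |x_i + t v_i|,
\]
and then treating the two groups separately on the interval $|t| < t_0$ with $t_0 = \min_{i\in S}|x_i|$. The strategy is that the restriction $|t| < t_0$ is exactly what is needed to linearize each absolute value, after which the formula for $\sigma_t(\h v)$ emerges by regrouping.

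The crucial step is the analysis of the support indices $i \in S$, where I would show that adding $t\h v$ cannot flip the sign of $x_i$. First I would observe that the normalization $\|\h v\|_2 = 1$ from \eqref{equ:v} forces $|v_i| \le 1$ for every coordinate, so that $|t v_i| \le |t| < t_0 \le |x_i|$. This strict bound guarantees $x_i + t v_i$ retains the sign of $x_i$, whence $|x_i + t v_i| = \mbox{sign}(x_i)(x_i + t v_i) = |x_i| + t\, v_i\, \mbox{sign}(x_i)$. Summing over $S$ then produces $\|\h x\|_1 + t\sum_{i\in S} v_i\, \mbox{sign}(x_i)$, which accounts for the first term of $\sigma_t(\h v)$.

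For the complementary indices $i \in \bar S$ we have $x_i = 0$, so $|x_i + t v_i| = |t|\,|v_i|$, and summing gives $|t|\,\|\h v_{\bar S}\|_1 = t\,\mbox{sign}(t)\,\|\h v_{\bar S}\|_1$, which supplies the second term of $\sigma_t(\h v)$. Adding the two contributions reproduces $\|\h x\|_1 + t\sigma_t(\h v)$ exactly. Strict positivity is then immediate from the same sign-preservation bound, since each support term obeys $|x_i + t v_i| \ge |x_i| - |t|\,|v_i| > 0$ and $S$ is nonempty because $\h x \neq \h 0$; alternatively one may invoke the observation already made before the lemma that $\h x + t\h v \neq \h 0$ for all $t$ when $\h b \neq \h 0$. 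The only genuine obstacle is the sign-preservation inequality $|t v_i| < |x_i|$; everything else is routine regrouping, and it is precisely the strictness $|t| < t_0$ that yields the clean linear-in-$t$ expression on $(-t_0, t_0)$.
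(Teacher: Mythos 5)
Your proposal is correct and follows essentially the same route as the paper's proof: both split the $L_1$ norm over $S$ and $\bar S$, use $\|\h v\|_2=1 \Rightarrow |v_i|\le 1$ so that $|tv_i| < |x_i|$ preserves $\mbox{sign}(x_i+tv_i)=\mbox{sign}(x_i)$ on the support, and regroup to obtain $\|\h x\|_1 + t\sigma_t(\h v)$. The positivity argument is also the same in substance (the paper invokes $\h x + t\h v \neq \h 0$, which you mention as your alternative), so there is nothing to add.
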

	\begin{proof}
	
	Since $\h x + t \h v \neq \h 0 $ for all $t \in \mathbb{R} $, we have $\|\h x+t\h v\|_1 > 0$.
	It follows from \eqref{equ:v} that $|v_i|\leq 1, \ \forall i$. Then we get $\mbox{sign}(x_i+tv_i ) = \mbox{sign}(x_i),\  \forall i \in S$, as $ |tv_i| < |x_i|$ for $|t| < t_0$.
	 Therefore, we have
		\begin{equation}
\begin{split}
	\|\h x+t\h v\|_1  &=\sum_{i \in S}|x_i + tv_i| + \sum_{i \notin S} |t| |v_i| \\ &= \sum_{i \in S} (x_i + tv_i)\mbox{sign}(x_i) + |t| \|\h v_{\bar{S}}\|_1
	\notag \\
	&= \sum_{i \in S} x_i \mbox{sign}(x_i) + t \sum_{i \in S} v_i \mbox{sign}(x_i) + |t|\|\h v_{\bar{S}}\|_1\\
	&= \|\h x\|_1 + t \sum_{i \in S} v_i \mbox{sign}(x_i)  + |t| \|\h v_{\bar{S}}\|_1\notag\\
	&= \|\h x\|_1 + t \left( \sum_{i \in S} v_i \mbox{sign}(x_i)  + \mbox{sign}(t)\|\h v_{\bar{S}}\|_1 \right), \\
\end{split}
\end{equation}
which implies \eqref{eq:x_tvL1} and hence  \Cref{lemma_x+tv} holds. 
	\end{proof}

\medskip
	Notice that  $ \sigma_{t}(\h v) $ only relies on the sign of $t$, i.e., it is  constant  for $t>0$ and $t<0$. Therefore, $g(t)$ is differentiable on $ 0 < t < t_0 $ and $ -t_0 < t < 0 $ (Note that when $t\neq 0$, $g$ is not differentiable at the points where $x_i + tv_i = 0 $). 
	Some simple calculations lead to the derivative of $g$ for $ 0 < t < t_0 $ and $ -t_0 < t < 0 $, 
	\begin{equation}\label{equ:de_g}
	\begin{split}
	& g^\prime(t)
	= \frac{d}{d t} \left( \frac{\left( \|\h x\|_1 + t \sigma_{t}(\h v) \right)^2}{\|\h x\|_2^2 +2t\left\langle \h v_S,\h x \right\rangle + t^2\|\h v\|_2^2} \right)
	\\
	&=  \frac{2\sigma_{t}(\h v) \left( \|\h x\|_1 + t \sigma_{t}(\h v) \right)\left( \|\h x\|_2^2 +2t\left\langle \h v_S,\h x \right\rangle + t^2\|\h v\|_2^2 \right) - \left(2\left\langle \h v_S,\h x \right\rangle + 2t\|\h v\|_2^2 \right)\left( \|\h x\|_1 + t \sigma_{t}(\h v) \right)^2}{\left( \|\h x\|_2^2 +2t\left\langle \h v_S,\h x \right\rangle + t^2\|\h v\|_2^2 \right)^2}
	\\
	&= \frac{2 \left( \|\h x\|_1 + t \sigma_{t}(\h v) \right)\left[ \sigma_{t}(\h v) \left( \|\h x\|_2^2 +2t\left\langle \h v_S,\h x \right\rangle + t^2\|\h v\|_2^2 \right) - \left(\left\langle \h v_S,\h x \right\rangle + t\|\h v\|_2^2 \right)\left( \|\h x\|_1 + t \sigma_{t}(\h v) \right) \right]}{\left( \|\h x\|_2^2 +2t\left\langle \h v_S,\h x \right\rangle + t^2\|\h v\|_2^2 \right)^2}
	\\
	&= \frac{2 \left( \|\h x\|_1 + t \sigma_{t}(\h v) \right)\left[ \left(\sigma_{t}(\h v)\|\h x\|_2^2 - \left\langle \h v_S,\h x \right\rangle \|\h x\|_1 \right) + \left( \sigma_{t}(\h v)\left\langle \h v_S,\h x \right\rangle - \|\h x\|_1\|\h v\|_2^2 \right)t \right]}{\left( \|\h x\|_2^2 +2t\left\langle \h v_S,\h x \right\rangle + t^2\|\h v\|_2^2 \right)^2}. 
	\end{split}
	\end{equation}
It follows from \Cref{lemma_x+tv} that the first term in the numerator of  \eqref{equ:de_g} is strictly positive, i.e., 	$\|\h x\|_1 +t \sigma_t(\h v)>0$. Therefore, the sign of $g^\prime$ depends on the second term in the numerator.  We further introduce two lemmas  (\Cref{lemma1} and \Cref{lemma2}) to study this term.   

\begin{lemma}\label{lemma1}
	For any  $\h  x,\h v \in \mathbb{R}^n $ and  $ i \in [n]$, we have
	\begin{eqnarray}
	&&	n\|\h x\|_2^2 - |x_i|\|\h x\|_1 \geq   (n-1)\left( \sum_{j \neq i} x_j^2 \right),\label{ineq:lem1x}\\
	&& n	\|\h v\|_1\|\h x\|_2^2 \geq \|\h x\|_1 |\left\langle \h v,\h x \right\rangle|.\label{ineq:lem1xv}
	\end{eqnarray}
	Furthermore, if $\|\h x\|_0=s$, then the constant $n$ in the inequalities can be reduced to $s$.		
\end{lemma}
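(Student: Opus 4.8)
The plan is to prove the two inequalities separately, obtaining the second as a consequence of the first together with an $\ell_\infty$ bound, and then to revisit the argument to extract the sparse refinement with $s$ in place of $n$.

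First I would tackle \eqref{ineq:lem1x} by direct expansion. Writing $\|\h x\|_2^2 = x_i^2 + \sum_{j\neq i}x_j^2$ and $|x_i|\|\h x\|_1 = x_i^2 + |x_i|\sum_{j\neq i}|x_j|$, the claimed inequality is equivalent to
\begin{equation}
(n-1)x_i^2 + \sum_{j\neq i}x_j^2 - |x_i|\sum_{j\neq i}|x_j| \geq 0.
\end{equation}
Since the index set $\{j : j\neq i\}$ has exactly $n-1$ elements, I can write $(n-1)x_i^2 = \sum_{j\neq i}x_i^2$ and group the left-hand side termwise as $\sum_{j\neq i}\left(x_i^2 + x_j^2 - |x_i||x_j|\right)$. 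Each summand is nonnegative because $x_i^2 + x_j^2 \geq 2|x_i||x_j| \geq |x_i||x_j|$, which settles \eqref{ineq:lem1x}.

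Next, choosing $i$ to be an index at which $|x_i|$ is maximal, \eqref{ineq:lem1x} yields the corollary $n\|\h x\|_2^2 \geq \|\h x\|_\infty\|\h x\|_1$, since the discarded term $(n-1)\sum_{j\neq i}x_j^2$ is nonnegative. To establish \eqref{ineq:lem1xv} I would combine this with the elementary H\"older bound $|\langle \h v, \h x\rangle| \leq \|\h v\|_1\|\h x\|_\infty$. Multiplying the latter by $\|\h x\|_1$ gives $\|\h x\|_1|\langle \h v, \h x\rangle| \leq \|\h v\|_1\left(\|\h x\|_1\|\h x\|_\infty\right) \leq n\|\h v\|_1\|\h x\|_2^2$, which is exactly \eqref{ineq:lem1xv}.

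Finally, for the sparse refinement I would observe that only the support $T := \mathrm{supp}(\h x)$, with $|T| = s$, enters the estimates. If $i \in T$, the termwise grouping runs over $T\setminus\{i\}$, which has $s-1$ elements, so $(s-1)x_i^2 = \sum_{j\in T,\, j\neq i}x_i^2$ and the same nonnegativity argument yields \eqref{ineq:lem1x} with $s$ replacing $n$, the zero coordinates contributing nothing. If $i\notin T$ then $x_i=0$ and the inequality is trivially $s\|\h x\|_2^2 \geq (s-1)\|\h x\|_2^2$. The corollary $s\|\h x\|_2^2 \geq \|\h x\|_\infty\|\h x\|_1$, and hence \eqref{ineq:lem1xv} with constant $s$, then follow verbatim. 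I expect the only delicate point to be this bookkeeping in the sparse case, namely recognizing that the relevant count in the AM--GM step is the support size $s$ rather than $n$ and treating $i\notin T$ separately, whereas the heart of the argument is the clean termwise decomposition in the first step.
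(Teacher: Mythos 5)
Your proof is correct, and the core of it --- the termwise decomposition establishing \eqref{ineq:lem1x} --- is exactly the paper's argument: the paper writes each summand as $(|x_i|-|x_j|)^2+|x_i||x_j|$, which is the same quantity $x_i^2+x_j^2-|x_i||x_j|\ge 0$ you obtain from AM--GM. The only place you diverge is in how \eqref{ineq:lem1xv} is extracted from \eqref{ineq:lem1x}. The paper multiplies the per-index inequality $n\|\h x\|_2^2-|x_i|\|\h x\|_1\ge 0$ by $|v_i|$ and sums over $i$, giving $n\|\h v\|_1\|\h x\|_2^2\ge\|\h x\|_1\sum_i|x_i||v_i|\ge\|\h x\|_1|\langle\h v,\h x\rangle|$; you instead specialize to a maximizing index to get $n\|\h x\|_2^2\ge\|\h x\|_\infty\|\h x\|_1$ and then apply the H\"older bound $|\langle\h v,\h x\rangle|\le\|\h v\|_1\|\h x\|_\infty$. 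These are two equally elementary ways of aggregating the same pointwise estimate, and both prove slightly more than is stated (the paper's intermediate bound involves $\sum_i|x_i||v_i|$, yours involves $\|\h x\|_\infty$). Your handling of the sparse refinement, including the separate treatment of $i\notin\mathrm{supp}(\h x)$, is in fact more explicit than the paper's, which dismisses that case with a single sentence.
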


\begin{proof}
	Simple calculations show that 
	\begin{equation}
	\begin{split}
	n\|\h x\|_2^2 - |x_i|\|\h x\|_1 &= n\left( \sum_{j} x_j^2 \right) - |x_i|\left( \sum_{j} |x_j| \right)
	\\ &= 
	(n-1)\left( \sum_{j \neq i} x_j^2 \right)  +   \sum_{j \neq i}x_j^2 + (n-1)x_i^2  -  \sum_{j \neq i}|x_i||x_j|
	\\ &=
	(n-1)\left( \sum_{j \neq i} x_j^2 \right)+   \sum_{j \neq i}\left( (|x_i| - |x_j|)^2 + |x_i||x_j| \right)
	\\ &
	\geq     (n-1)\left( \sum_{j \neq i} x_j^2 \right) \geq 0.
	\end{split}
	\end{equation}
	Therefore, we have
	$
	\sum_{i} \left( n\|\h x\|_2^2 - |x_i|\|\h x\|_1 \right) |v_i|  \geq 0,
	$
	which implies that 
	\begin{equation}
	n\|\h v\|_1\|\h x\|_2^2 \geq \|\h x\|_1\left( \sum_{i} |x_i||v_i| \right)  \geq \|\h x\|_1	|\left\langle \h v,\h x \right\rangle|.
	\end{equation}
		Similarly, we can reduce the constant $n$ to $s$,  if we know  $\|\h x\|_0 = s$. 
\end{proof}

\begin{lemma}\label{lemma2}		
	Suppose that an $s$-sparse vector $\h x$ satisfies $A\h x =\h b \ (\h b \neq 0)$ with its support on an index set $S$ and the matrix $A$ satisfies  the sNSP of order $ s$.  Define
	\begin{equation}
	t_1:= \inf_{\h v,t} \left\lbrace \frac{ |\sigma_{t}(\h v)\|\h x\|_2^2 - \left\langle \h v_S,\h x \right\rangle \|\h x\|_1 |}{ | \sigma_{t}(\h v)\left\langle \h v_S,\h x \right\rangle - \|\h x\|_1\|\h v\|_2^2|} \, \middle| \,\h v \in \ker(A), \|\h v\|_2 = 1, t \neq 0  \right\rbrace,
	\end{equation}
	where 	$\sigma_{t}(\h v)$ is defined as \eqref{equ:simga}.  
	 Then $t_1>0.$
\end{lemma}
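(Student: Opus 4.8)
The plan is to reduce the claim to showing that the numerator of the quotient never vanishes, and then to invoke a compactness argument. First I would note that, writing $N(\h v,t) := |\sigma_{t}(\h v)\|\h x\|_2^2 - \langle \h v_S,\h x\rangle\|\h x\|_1|$ for the numerator and $D(\h v,t) := |\sigma_{t}(\h v)\langle \h v_S,\h x\rangle - \|\h x\|_1\|\h v\|_2^2|$ for the denominator, both quantities depend on $t$ only through $\mbox{sign}(t)$, via the formula \eqref{equ:simga} for $\sigma_{t}(\h v)$. Thus the infimum defining $t_1$ is effectively taken over the compact set $K := \{\h v \in \ker(A) : \|\h v\|_2 = 1\}$ together with the two choices $\mbox{sign}(t) = \pm 1$, and for each fixed sign the maps $\h v \mapsto N$ and $\h v \mapsto D$ are continuous on $K$.

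The crux is to establish that $N(\h v,t) > 0$ for every $\h v \in K$ and $t \neq 0$. Splitting the $\bar{S}$-part out of $\sigma_{t}$, I would write the quantity inside the absolute value as $\mbox{sign}(t)\,\|\h v_{\bar{S}}\|_1\|\h x\|_2^2 + P$, where $P := \sum_{i \in S} v_i\,\mbox{sign}(x_i)\big(\|\h x\|_2^2 - |x_i|\,\|\h x\|_1\big)$ gathers the terms supported on $S$. The triangle inequality then gives $N \ge \|\h v_{\bar{S}}\|_1\|\h x\|_2^2 - |P|$, so it remains to bound $|P|$. Here \Cref{lemma1} enters: inequality \eqref{ineq:lem1x}, with the constant reduced to $s$ because $\|\h x\|_0 = s$, yields $0 \le |x_i|\,\|\h x\|_1 \le s\|\h x\|_2^2$, hence $\big|\,\|\h x\|_2^2 - |x_i|\,\|\h x\|_1\,\big| \le (s-1)\|\h x\|_2^2$ for each $i \in S$, and consequently $|P| \le (s-1)\|\h x\|_2^2\,\|\h v_S\|_1$.

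The strong null space property now closes the gap. For $\h v \in \ker(A)\setminus\{\h 0\}$, \Cref{def:sNSP} gives $\|\h v_{\bar{S}}\|_1 \ge (s+1)\|\h v_S\|_1$, so
\begin{equation*}
N \;\ge\; \|\h x\|_2^2\big(\|\h v_{\bar{S}}\|_1 - (s-1)\|\h v_S\|_1\big) \;\ge\; \|\h x\|_2^2\big((s+1)-(s-1)\big)\|\h v_S\|_1 \;=\; 2\|\h x\|_2^2\,\|\h v_S\|_1 .
\end{equation*}
This is strictly positive whenever $\|\h v_S\|_1 > 0$. The degenerate case $\|\h v_S\|_1 = 0$ I would treat directly: there $P = 0$ and $\h v_{\bar{S}} = \h v$, so $N = \|\h v\|_1\|\h x\|_2^2 > 0$, since $\|\h v\|_2 = 1$ forces $\h v \neq \h 0$ and $\h x \neq \h 0$. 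Observe that the strict dominance is precisely what the extra factor in the \emph{strong} NSP buys us: the ordinary NSP \eqref{eq:NSP} only guarantees $\|\h v_{\bar{S}}\|_1 > \|\h v_S\|_1$, which cannot be expected to beat the factor $(s-1)$ appearing in the bound on $|P|$.

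It then remains to pass from pointwise positivity to a positive infimum, which is where compactness is used. For each sign of $t$, the continuous function $N$ attains a strictly positive minimum $m > 0$ on the compact set $K$, while the continuous $D$ attains a finite maximum $M < \infty$; hence on $K$ the ratio $N/D$ is bounded below by $m/M > 0$ (the points where $D = 0$ only make the ratio larger, as $N$ stays positive there). Taking the smaller of the two bounds over $\mbox{sign}(t) = \pm 1$ yields $t_1 \ge m/M > 0$. I expect the main obstacle to be the pointwise bound $N > 0$ — specifically, combining \Cref{lemma1} with \Cref{def:sNSP} so that the gain $(s+1)$ strictly overcomes the loss $(s-1)$; once this is secured, the continuity-compactness conclusion is routine.
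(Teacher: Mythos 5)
Your proof is correct, and it takes a genuinely different route from the paper's. The paper bounds the denominator directly by $\sqrt{n}\|\h x\|_2+\|\h x\|_1$ and then lower-bounds the numerator through a case analysis: $s=1$ is handled separately with a sub-dichotomy on whether $|v_n|\geq 1/\sqrt n$, and $s>1$ is split according to whether all $|v_j|$, $j\in S$, lie below a threshold $c=\tfrac{1}{2s+2}$, using the two consequences $|\sigma_t(\h v)|\geq s\|\h v_S\|_1$ and $|\sigma_t(\h v)|\geq\tfrac{s}{s+1}\|\h v_{\bar S}\|_1$ of the sNSP together with \Cref{lemma1}; this produces the explicit lower bounds \eqref{equ:t_1_part2} and \eqref{equ:t_1_part1} for $t_1$. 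You instead make a single algebraic decomposition of the numerator as $\mathrm{sign}(t)\|\h v_{\bar S}\|_1\|\h x\|_2^2+P$ with $|P|\leq (s-1)\|\h x\|_2^2\|\h v_S\|_1$ (the estimate $|x_i|\,\|\h x\|_1\leq s\|\h x\|_2^2$ from \eqref{ineq:lem1x} is exactly what is needed, and the $s=1$ case degenerates harmlessly to $P=0$), so that the sNSP gain $(s+1)$ strictly beats the loss $(s-1)$ uniformly in $s$; you then convert pointwise positivity into a positive infimum by continuity and compactness of the unit sphere of $\ker(A)$. Your argument is shorter, treats all $s$ at once, and isolates cleanly why the \emph{strong} NSP (rather than the ordinary one) is required; what it gives up is the explicit constant for $t_1$ that the paper's case analysis yields — though your own estimate $N\geq\|\h x\|_2^2\max\{2\|\h v_S\|_1,\,1-s\|\h v_S\|_1\}\geq\tfrac{2}{s+2}\|\h x\|_2^2$ (using $\|\h v\|_1\geq\|\h v\|_2=1$) would recover an explicit bound in one extra line and let you drop the compactness appeal entirely.
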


\begin{proof}
	For any $\h v \in \ker(A) $ and $\|\h v\|_2 = 1$, it is straightforward that
	\begin{equation}
	\begin{split}
	| \sigma_{t}(\h v)\left\langle\h v_S,\h x \right\rangle - \|\h x\|_1\|\h v\|_2^2| & \leq | \sigma_{t}(\h v)|\|\h v\|_2\|\h x\|_2 + \|\h x\|_1\|\h v\|_2^2
	\\ & \leq 
	\|\h v\|_1\|\h v\|_2\|\h x\|_2 + \|\h x\|_1\|\h v\|_2^2 \\
	& = 
	\|\h v\|_1\|\h x\|_2 + \|\h x\|_1 \\
	& \leq 
	\sqrt{n}\|\h x\|_2 + \|\h x\|_1,
	\end{split}
	\end{equation}
	and
	\begin{equation}
	|\sigma_{t}(\h v)| \geq | \mbox{sign}(t)\|\h v_{\bar{S}}\|_1 |-|  \sum_{i \in S} v_i \mbox{sign}(x_i) | \geq 
	\|\h v_{\bar{S}}\|_1 - \sum_{i \in S} |v_i| = \|\h v_{\bar{S}}\|_1 - \|\h v_S\|_1.
	\end{equation}
	It follows from the sNSP that $ \|\h v_{\bar{S}}\|_1 \geq (s+1)\|\h v_S\|_1 $, thus leading to the following two inequalities, 	
	\begin{equation}\label{equ:sigma_ineq}
	\begin{split}
	|\sigma_{t}(\h v)| &\geq \|\h v_{\bar{S}}\|_1 - \|\h v_S\|_1 \geq s\|\h v_S\|_1 \\ 		|\sigma_{t}(\h v)| &\geq \|\h v_{\bar{S}}\|_1 - \|\h v_S\|_1 \geq (1 - \frac{1}{s+1})\|\h v_{\bar{S}}\|_1 =\frac{s}{s+1}\|\h v_{\bar{S}}\|_1.
	\end{split}
	\end{equation}	
	Next we will discuss two cases:  $s=1$ and $s>1$. 
	
	\begin{enumerate}	
		\item[(i)] For $s = 1 $. Without loss of generality, we assume the only non-zero element is $x_n \neq 0$ and hence 
		we have
		\begin{equation*}
		|\sigma_{t}(\h v)\|\h x\|_2^2 - \left\langle \h v_S,\h x \right\rangle \|\h x\|_1 |  =   |( v_n \mbox{sign}(x_n)  + \mbox{sign}(t)\|\h v_{\bar{S}}\|_1)x_n^2 -(v_nx_n)|x_n| |=
		\|\h v_{\bar{S}}\|_1x_n^2.
		\end{equation*}
		We further discuss two cases:  $|v_n| \geq \frac{1}{\sqrt{n}}$ and  $|v_n| < \frac{1}{\sqrt{n}}$.
		If  $|v_n| \geq \frac{1}{\sqrt{n}}$, then $ \|\h v_{\bar{S}}\|_1 \geq (s+1)|v_n| \geq \frac{s+1}{\sqrt{n}} $ and  hence
		\begin{equation}\label{eq:lemma_s1_geg}
		|\sigma_{t}(\h v)\|\h x\|_2^2 - \left\langle \h v_S,\h x \right\rangle \|\h x\|_1 |  =   \|\h v_{\bar{S}}\|_1\|\h x\|_2^2 \geq \frac{s+1}{\sqrt{n}} \|\h x\|_2^2. 
		\end{equation}
		If $|v_n| < \frac{1}{\sqrt{n}}$, then we have $ \|\h v_{\bar{S}}\|_1 \geq 1 - |v_n| = 1 - \frac{1}{\sqrt{n}} = \frac{\sqrt{n} - 1}{\sqrt{n}} $ and
		\begin{equation}\label{eq:lemma_s1_leq}
		|\sigma_{t}(v)\|\h x\|_2^2 - \left\langle \h v_S,\h x \right\rangle \|\h x\|_1 |  =   \|\h v_{\bar{S}}\|_1\|\h x\|_2^2 \geq \frac{\sqrt{n} - 1}{\sqrt{n}} \|\h x\|_2^2. 
		\end{equation}
		Combining \eqref{eq:lemma_s1_geg} and \eqref{eq:lemma_s1_leq}, we have
		\begin{equation}\label{equ:t_1_part2}
		t_1 \geq \frac{ \min \left\lbrace \frac{s+1}{\sqrt{n}} \|\h x\|_2^2, \frac{\sqrt{n} - 1}{\sqrt{n}} \|\h x\|_2^2  \right\rbrace }{\sqrt{n}\|\h x\|_2 + \|\h x\|_1}>0. 
		\end{equation}

	\item[(ii)] 
		 For $s>1$. 
We split into two cases. The first case is $\forall j\in S, \ v_j < c $ (we will determine the value of $c$ shortly). As a result, we get $\|\h v_{S} \|_1< s c $  and $\|\h v_{\bar{S}}\|_1\geq 1-sc$ since $\|\h v\|_1 \geq  \|\h v\|_2 = 1$.  Some simple calculations lead to
	\begin{equation*}
	\begin{split}
	\left\vert\sigma_{t}(\h v)\|\h x\|_2^2 - \left\langle\h v_S,\h x \right\rangle \|\h x\|_1 \right\vert  & \geq  |\sigma_{t}(\h v)|\|\h x\|_2^2 - |\left\langle \h v_S,\h x \right\rangle |\|\h x\|_1  
	\\  & \geq 
	\frac{s}{s+1}\|\h v_{\bar{S}}\|_1\|\h x\|_2^2 - \sum_{i\in S}|v_i||x_i| \|x\|_1 \quad (\text{based on }  \eqref{equ:sigma_ineq})\\
	&\geq
	\frac{s}{s+1}(1-sc)\|\h x\|_2^2-\sum_{i\in S}c|x_i|\|\h x\|_1
	\\		& =
	\frac{s}{s+1}(1-sc)\|\h x\|_2^2-c\|\h x\|_1^2  \\
	& \geq \frac{s}{s+1}(1-su_0)\|\h x\|_2^2-sc\|\h x\|_2^2 \\
	& = \frac s {s+1}\big(1-(2s+1)c\big)\|\h x\|_2^2.
	\end{split}
	\end{equation*}
	If we choose $c=\frac 1 {2s+2},$ then the above quantity is larger than $\frac{s\|\h x\|_2 ^2}{(s+1)(2s+2)}>0.$

In the second case, we have there exist $j\in S$ such that  $ v_j \geq c$, leading to 
\begin{equation}
		\begin{split}
			\left\vert\sigma_{t}(\h v)\|\h x\|_2^2 - \left\langle \h v_S,\h x \right\rangle \|\h x\|_1 \right\vert  &\geq  |\sigma_{t}(\h v)|\|\h x\|_2^2 - |\left\langle \h v_S,\h x \right\rangle |\|\h x\|_1  
		\\ &\geq 
		s\|\h v_S\|_1\|\h x\|_2^2 - \left(\sum_{i \in S}  |x_i||v_i| \right) \|\h x\|_1  \\
		&=
		\sum_{i \in S} \left( s\|\h x\|_2^2 - |x_i|\|\h x\|_1 \right)|v_i|
		\\ & \geq 
		\left( s\|\h x\|_2^2 - |x_j|\|\h x\|_1 \right)|v_j| \quad (\text{based on } \Cref{ineq:lem1x})\\
		& \geq
		c \left( s\|\h x\|_2^2 - |x_j|\|\h x\|_1 \right)
		\\ & \geq 
		c (s-1)\sum_{i \neq j}x_i^2 \quad (\text{based on } \Cref{ineq:lem1x}) \\
		& \geq
		c(s-1) \min_{j \in S} \sum_{i \neq j}x_i^2.
		\end{split}
		\end{equation}	
	These two cases guarantee that $t_1>0$, i.e.,
		\begin{equation}\label{equ:t_1_part1}
		t_1 \geq \frac{ \min \left\lbrace c(s-1) \min\limits_{j \in S} \sum\limits_{i \neq j}x_i^2,\ \frac{s\|\h x\|_2 ^2}{(s+1)(2s+2)} \right\rbrace }{\sqrt{n}\|\h x\|_2 + \|\h x\|_1}>0. 
		\end{equation}
	\end{enumerate}
	By \eqref{equ:t_1_part2} and \eqref{equ:t_1_part1}, we get \Cref{lemma2}. 
\end{proof}

\medskip
Now, we are ready to prove \Cref{thm:sNSP}. 
\begin{proof}	
	According to \eqref{eq:x_tvL1}, the first term in the numerator is strictly positive, i.e., 
	$ \|\h x\|_1 + t \sigma_{t}(\h v) = \|\h x + t\h v\|_1 > 0, \ \forall |t| < t_0$. As for the second one,  there exists a positive number $t_1$ defined in \Cref{lemma2} such that  
\begin{equation*}
	\left|\sigma_{t}(\h v)\left\langle \h v_S,\h x \right\rangle - \|\h x\|_1\|\h v\|_2^2 \right| |t| <|\sigma_{t}(\h v)\|\h x\|_2^2 - \left\langle \h v_S,\h x \right\rangle \|\h x\|_1 |
\end{equation*}
	for all $|t| < t_1$ and  $\h v \in \ker(A) \text{ with } \|\h v\|_2 = 1. $ Moreover, we have
	\begin{eqnarray*}
		&&	\mbox{sign} \Big[ (\sigma_{t}(\h v)\|\h x\|_2^2 - \left\langle \h v_S,\h x \right\rangle \|\h x\|_1  + \left( \sigma_{t}(\h v)\left\langle \h v_S,\h x \right\rangle - \|\h x\|_1\|\h v\|_2^2 \right)t \Big]\\
		&  =& \mbox{sign} \Big(\sigma_{t}(\h v)\|\h x\|_2^2 - \left\langle \h v_S,\h x \right\rangle \|\h x\|_1 \Big).
	\end{eqnarray*}
	Letting $t^* = \min\{t_0,t_1\} $, we have for any $t\in (0,t^*)$ and $\h v\neq \h 0$ that  $\sigma_{t}(\h v) > 0 $ as
	\begin{equation}
	\begin{split}
	\sigma_{t}(\h v) &=  \sum_{i \in S} v_i \mbox{sign}(x_i)  + \mbox{sign}(t)\|\h v_{\bar{S}}\|_1\\
	&= 
	 \sum_{i \in S} v_i \mbox{sign}(x_i) + \|\h v_{\bar{S}}\|_1
	\\ & \geq 
	\|\h v_{\bar{S}}\|_1 -  \|\h v_S\|_1 \\
	& \geq 
	\max \left\lbrace s\|\h v_S\|_1, \frac{s}{s+1}  \| 
	\h v_{\bar{S}}\|_1  \right\rbrace > 0  \quad (\text{based on } \eqref{equ:sigma_ineq}). 
	\end{split}
	\end{equation}
	 Also \eqref{equ:sigma_ineq} implies that
	\begin{equation}
	|\sigma_{t}(\h v)|\|\h x\|_2^2 \geq s \|\h v_S\|_1 \|\h x \|_2^2 \geq \|\h v_S\|_1 \|\h x \|_1^2 \geq |\left\langle \h v_S,\h x \right\rangle| \|\h x\|_1 ,
	\end{equation}
	thus leading to
	\begin{equation}
	\sigma_{t}(\h v)\|\h x\|_2^2 - \left\langle \h v_S,\h x \right\rangle \|\h x\|_1 \geq 0,
	\end{equation}
	for $\sigma_{t}(\h v) > 0.$ 
	As a result,   we have $g^{\prime}(t) \geq 0 $ if $0 < t < t^* $. The function $g(t)$  is not differentiable at zero, but  we can compute the sub-derivative as follows,
	\begin{equation}
	g^{\prime}(0^+) = \lim_{t \to  0^+} \frac{g(t) - g(0)}{t - 0} = \frac{2 \|\h x\|_1 \left(\sigma_{+1}(\h v)\|\h x\|_2^2 - \left\langle \h v_S,\h x \right\rangle \|\h x\|_1 \right)}{ \|\h x\|_2^4} \geq 0.
	\end{equation}
	Similarly, we can get $g^{\prime}(t) \leq 0 $ if $-t^* < t < 0 $ and $g^{\prime}(0^-)\leq 0$. Therefore for any $ 0<|t| < t^*  $ we have $g(0) \leq g(t) $, which implies that 
	\begin{equation}
	\frac{\|\h x+t\h v\|_1}{\|\h x+t\h v\|_2} \geq  \frac{\|\h x\|_1}{\|\h x\|_2}, \qquad \forall |t| < t^*.
	\end{equation}
	Notice that $t^* $ does not depend on the choice of $\h v $, therefore the inequality is true for any $\h v $ satisfying \ref{equ:v}, which will imply the result.
	
\end{proof}

\bibliographystyle{siamplain}
\bibliography{refer_l1dl2}

\end{document}